 \patchcmd{\@setaddresses}{\scshape\ignorespaces}{\ignorespaces}{}{} 
\appto\maketitle{%
\let\@makefnmark\relax  \let\@thefnmark\relax
\ifx\@empty\addresses\else\@footnotetext{%
  \vskip-\bigskipamount\@setaddresses}
  }
\def\enddoc@text{}
\shorttitle}
\@nx\MakeUppercase{\the\toks@}}
\patchcmd\@settitle{\uppercasenonmath\@title}{\Large}{}{}
\authors}
\newtheorem{theorem}{Theorem}[section]
\newtheorem{corollary}{Corollary}[section]
\newtheorem{lemma}{Lemma}[section]
\newtheorem{remark}{Remark}[section]
\numberwithin{equation}{section}
  \newtheorem{thqt}{Theorem}
  \newtheorem{propqt}[thqt]{Proposition}
  \newtheorem{dfn}[thqt]{Definition}
\newcommand{\h}{\mathcal{H} }
\newcommand{\bh}{\mathbb B(\h)}
\newcommand{\bad}{\mathbb{B}_{A^{1/2}}(\h)}
\newcommand{\wat}{W_A(T)}
\newcommand{\maxat}{W_{\max}^A(T)}
\newcommand{\modu}[1]{\left\vert#1\right\vert}
\newcommand{\nor}[1]{\left\Vert#1\right\Vert}
\newcommand{\nora}[1]{\left\Vert#1\right\Vert_{A}}
\newcommand{\set}[1]{\{#1\}}
\newcommand{\cit}{\mathbb{C}}
\newcommand{\dt}{\delta_{T}}
\begin{document}

\author[A. Baghdad, E. H. Benabdi and K. Feki] {\large{Abderrahim Baghdad }$^{1_{a,b}}$, \large{El Hassan Benabdi }$^{2}$ and \large{Kais Feki }$^{3_{a,b}}$}
\title[A note on the generalized maximal numerical range of operators]{A note on the generalized maximal numerical range of operators}

\address{$^{[1]}$ Department of Mathematics, Faculty of Sciences-Semlalia, University Cadi Ayyad, Marrakesh, Morocco.}
\email{\url{bagabd66@gmail.com}}

\address{$^{[2]}$ Department of Mathematics, Laboratory of Mathematics, Statistics and Applications, Faculty of Sciences,
Mohammed V University in Rabat, Rabat, Morocco.}
\email{\url{e.benabdi@um5r.ac.ma}}

\address{$^{[3_a]}$ University of Monastir, Faculty of Economic Sciences and Management of Mahdia, Mahdia, Tunisia}
\address{$^{[3_b]}$ Laboratory Physics-Mathematics and Applications (LR/13/ES-22), Faculty of Sciences of Sfax, University of Sfax, Sfax, Tunisia}
\email{\url{kais.feki@hotmail.com}\,;\,\url{kais.feki@fsegma.u-monastir.tn}}

\subjclass[2020]{47B20, 47A12, 46C05, 47A10}

\keywords{Positive operator, $A$-maximal numerical range, $A$-normaloid operator}

\date{\today}

\maketitle

\begin{abstract}
The paper considers some new properties of the so-called $A$-maximal numerical range of operators, denoted by $W_{\max}^A(\cdot)$, where $A$ is a positive bounded linear operator acting on a complex Hilbert space $\mathcal{H}$. Some characterizations of $A$-normaloid operators are also given. In particular, we extend a recent recent by Spitkovsky in [Oper. Matrices, 13, 3(2019)]. Namely, it is shown that an $A$-bounded linear operator $T$ acting on $\mathcal{H}$ is $A$-normaloid if and only if $W_{\max}^A(T)\cap \partial W_A(T)\neq\varnothing$. Here $\partial W_A(T)$ stands for the boundary of $A$-numerical range of $T$. Some new $A$-numerical radius inequalities generalizing and improving earlier well-known results are also given.
\end{abstract}

\section{Introduction and Preliminaries}\label{s1}
Throughout this work $\mathcal{H}$ stands for a non trivial complex Hilbert space with inner product $\langle\cdot,\cdot\rangle$ and associated norm $\|\cdot\|$. By $\mathbb{B}(\mathcal{H})$, we denote the $C^*$-algebra of bounded linear operators acting on $\mathcal{H}$ with the identity operator $I_\mathcal{H}$ (or simply $I$ if no confusion arises). For simplicity, by an operator we mean an operator in $\mathbb{B}(\mathcal{H})$. For every operator $T$, its adjoint is denoted by $T^*$, its range by $\mathcal{R}(T)$ and its null space by $\mathcal{N}(T)$.

 For the sequel, the following facts are useful. An operator $T$ is said to be positive if $\langle Tx, x\rangle\geq 0$ for every $x\in \mathcal{H}$. By $\mathbb{B}(\mathcal{H})^+$, we denote the cone of positive (semi-definite) operators, i.e.
$$\mathbb{B}(\mathcal{H})^+=\left\{T\in \mathbb{B}(\mathcal{H})\,;\,\langle Tx, x\rangle\geq 0,\;\forall\;x\in \mathcal{H}\;\right\}.$$
For the rest of this article, we suppose that $A\in\mathbb{B}(\mathcal{H})^+$ is a nonzero operator which clearly defines the following positive semidefinite sesquilinear form:
$$\langle\cdot,\cdot\rangle_{A}:\mathcal{H}\times \mathcal{H}\longrightarrow\mathbb{C},\;(x,y)\longmapsto\langle x, y\rangle_{A} :=\langle Ax, y\rangle=\langle A^{1/2}x, A^{1/2}y\rangle.$$
Here $A^{1/2}$ means the square root of $A$. We denote by ${\|\cdot\|}_A$ the seminorm induced by ${\langle \cdot, \cdot\rangle}_A$ which is given by ${\|x\|}_A = \sqrt{{\langle x, x\rangle}_A}=\sqrt{\|A^{1/2}x\|}$ for every $x\in\mathcal{H}$. It can be checked that ${\|x\|}_A = 0$ if and only if $x\in \mathcal{N}(A)$. So, ${\|\cdot\|}_A$ is a norm on $\mathcal{H}$ if and only if $A$ is one-to-one. Furthermore, one may verify that the semi-Hilbert space $(\mathcal{H}, {\|\cdot\|}_A)$ is complete if and only if $\mathcal{R}(A)$ is closed in $(\mathcal{H},\|\cdot\|)$. For a given $T\in\mathbb{B}(\mathcal{H})$, if there exists $c>0$ such that $\|Tx \|_{A} \leq c \|x \|_{A}$ for all $x\in
\overline{\mathcal{R}(A)}$, then it holds:
\begin{equation*}\label{semii}
\|T\|_A:=\sup_{\substack{x\in \overline{\mathcal{R}(A)},\\ x\not=0}}\frac{\|Tx\|_A}{\|x\|_A}=\displaystyle\sup_{\substack{x\in \overline{\mathcal{R}(A)},\\ \|x\|_A= 1}}\|Tx\|_{A}<\infty.
\end{equation*}
If $A=I$, we get the classical norm of an operator $T$ which will be simple denoted by $\|T\|_A$. From now on, we denote $\mathbb{B}^{A}(\mathcal{H}):=\left\{T\in \mathbb{B}(\mathcal{H})\,;\,\|T\|_{A}< \infty\right\}$. It is important to note that $\mathbb{B}^{A}(\mathcal{H})$ is not generally a subalgebra of $\mathbb{B}(\mathcal{H})$ (see \cite{feki01}). Further, it is difficult to check that $\|T\|_A=0$ if and only if $ATA=0$. Recently, there are many papers that study operators defined on a semi-Hilbert space $(\mathcal{H}, {\|\cdot\|}_A)$. One may see \cite{bakfeki01, bakna, bakna2, ,book,feki03,kz, majsecesuci} and their references.

Let $T\in \bh$. An operator $S\in\bh$ is called an $A$-adjoint operator of $T$ if ${\langle Tx, y\rangle}_{A} = {\langle x, Sy\rangle}_{A}$ for all $x, y\in \mathcal{H}$ (see \cite{acg1}). Clearly, $S$ is an $A$-adjoint of $T$ if and only if $AS=T^*A$, i.e., $S$ is a solution in $\bh$ of the equation $AX= T^*A$. We mention here that this type of operator equations can be studied by using the following famous theorem due to Douglas (for its proof see \cite{doug}).
\begin{thqt}\label{dougt}
If $T, U \in \bh$, then the following statements are equivalent:
\begin{itemize}
\item[{\rm (i)}] $\mathcal{R}(U) \subseteq \mathcal{R}(T)$,
\item[{\rm (ii)}] $TS=U$ for some $S\in \bh$,
\item[{\rm (iii)}] There exists  $\lambda> 0$ such that $\|U^*x\| \leq \lambda\|T^*x\|$ for all $x\in \h$.
\end{itemize}
If one of these conditions holds, then there exists a unique solution of the operator equation $TX=U$, denoted by $Q$, such that $\mathcal{R}(Q) \subseteq \overline{\mathcal{R}(T^{*})}$. Such $Q$ is called the reduced solution of $TX=U$.
\end{thqt}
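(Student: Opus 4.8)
The plan is to establish the three-way equivalence together with the statement about the reduced solution, disposing of the two genuinely easy implications first. The implication (ii)$\Rightarrow$(i) is immediate: if $TS=U$ for some $S\in\bh$, then every vector of the form $Ux=T(Sx)$ lies in $\mathcal{R}(T)$, so $\mathcal{R}(U)\subseteq\mathcal{R}(T)$. Likewise (ii)$\Rightarrow$(iii) is a one-line estimate: from $TS=U$ one gets $U^*=S^*T^*$, whence $\|U^*x\|=\|S^*T^*x\|\leq\|S\|\,\|T^*x\|$ for every $x\in\h$, so (iii) holds with $\lambda=\|S\|$ (any positive $\lambda$ works when $S=0$).

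The first substantial step is (iii)$\Rightarrow$(ii). Assuming $\|U^*x\|\leq\lambda\|T^*x\|$ for all $x$, I would define a map $S_0$ on $\mathcal{R}(T^*)$ by $S_0(T^*x):=U^*x$. The hypothesis makes this well defined, since $T^*x=T^*y$ forces $T^*(x-y)=0$ and hence $U^*(x-y)=0$, and it simultaneously yields $\|S_0(T^*x)\|\leq\lambda\|T^*x\|$, so $S_0$ is bounded on $\mathcal{R}(T^*)$ with norm at most $\lambda$. Extending $S_0$ by continuity to $\overline{\mathcal{R}(T^*)}$ and declaring it to be $0$ on the orthogonal complement $\mathcal{N}(T)=\mathcal{R}(T^*)^{\perp}$ produces a bounded operator on $\h$ satisfying $S_0T^*=U^*$; taking adjoints gives $TS_0^*=U$, so $S:=S_0^*$ witnesses (ii).

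The remaining implication (i)$\Rightarrow$(ii) is where I expect the main obstacle, and it is also the step that produces the reduced solution. Given $\mathcal{R}(U)\subseteq\mathcal{R}(T)$, for each $x$ the equation $Tz=Ux$ is solvable, and I would let $Qx$ be its unique solution lying in $\overline{\mathcal{R}(T^*)}=\mathcal{N}(T)^{\perp}$ (obtained by projecting any solution onto $\mathcal{N}(T)^{\perp}$; uniqueness follows because $\mathcal{N}(T)\cap\mathcal{N}(T)^{\perp}=\{0\}$). This uniqueness makes $Q$ linear, and boundedness is precisely what the closed graph theorem delivers: if $x_n\to x$ and $Qx_n\to w$, then $w\in\overline{\mathcal{R}(T^*)}$ because that subspace is closed, while $TQx_n=Ux_n\to Ux$ and $TQx_n\to Tw$ force $Tw=Ux$, so by uniqueness $w=Qx$ and the graph is closed. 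The resulting $Q\in\bh$ satisfies $TQ=U$ with $\mathcal{R}(Q)\subseteq\overline{\mathcal{R}(T^*)}$, which is the reduced solution; if $TQ_1=TQ_2=U$ with both ranges in $\overline{\mathcal{R}(T^*)}$, then $\mathcal{R}(Q_1-Q_2)\subseteq\mathcal{N}(T)\cap\mathcal{N}(T)^{\perp}=\{0\}$, giving uniqueness. The implications (i)$\Rightarrow$(ii), (ii)$\Rightarrow$(i), (ii)$\Rightarrow$(iii), (iii)$\Rightarrow$(ii) then close all three equivalences, and the delicate points throughout are the well-definedness of $S_0$ and the verification that the graph of $Q$ is closed.
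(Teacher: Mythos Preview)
Your argument is correct and is essentially the classical proof of Douglas' factorization theorem. Note, however, that the paper does not supply its own proof of this statement: it is quoted as a known result with the parenthetical ``for its proof see \cite{doug}'', so there is no in-paper argument to compare against. What you have written matches the standard treatment in Douglas' original paper, including the construction of the reduced solution via the closed graph theorem.
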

Let $\bad$ denote the set of all operators that admit $A^{1/2}$-adjoints. An application of Theorem \ref{dougt} shows that
$$\mathbb{B}_{A^{1/2}}(\mathcal{H})=\left\{T \in \mathbb{B}(\mathcal{H})\,;\;\exists \,\lambda > 0\,\text{ such that }\|Tx\|_{A} \leq \lambda \|x\|_{A},\;\forall\,x\in \mathcal{H}  \right\}.$$
If $T\in \mathbb{B}_{A^{1/2}}(\mathcal{H})$, then $T$ is said $A$-bounded. It can be observed that if $T\in \mathbb{B}_{A^{1/2}}(\mathcal{H})$, then $T(\mathcal{N}(A))\subseteq \mathcal{N}(A)$. Further, the following property $\|TS\|_A\leq \|T\|_A\|S\|_A$ holds for all $T,S\in \mathbb{B}_{A^{1/2}}(\mathcal{H})$. Also, if $T\in\mathbb{B}_{A^{1/2}}(\mathcal{H})$, then the authors of \cite{fg} showed that
\begin{align*}
\|T\|_A
&=\sup\left\{\|Tx\|_{A}\,;\;x\in \mathcal{H},\,\|x\|_{A}= 1\right\}\\
&=\sup\left\{|\langle Tx, y\rangle_A|\,;\;x,y\in \mathcal{H},\,\|x\|_{A}=\|y\|_{A}= 1\right\}.
\end{align*}
For more details regarding the class of $A$-bounded operators, we refer the reader to \cite{acg3,feki01,majsecesuci} and the references therein. Note that $\mathbb{B}_{A^{1/2}}(\mathcal{H})$ is a subalgebra of $\mathbb{B}(\mathcal{H})$ which is neither closed nor dense in $\mathbb{B}(\mathcal{H})$. Moreover, the following inclusions:
\begin{equation}\label{inc}
\mathbb{B}_{A^{1/2}}(\mathcal{H})\subseteq
\mathbb{B}^{A}(\mathcal{H})\subseteq \mathbb{B}(\mathcal{H})
\end{equation}
hold. Note that in general the inclusions in \eqref{inc} are proper. However, if $A$ is an injective operator, then obviously $\mathbb{B}_{A^{1/2}}(\mathcal{H})=
\mathbb{B}^{A}(\mathcal{H})$. Further, if $A$ has a closed range in $\mathcal{H}$, then it can be seen that $\mathbb{B}^{A}(\mathcal{H})= \mathbb{B}(\mathcal{H})$. So, the inclusions in \eqref{inc} remain equalities if $A$ is injective and has a closed range. We refer to \cite{acg1,acg2,acg3,feki01} and the references therein for an account of results related the theory of semi-Hilbert spaces.

The notion of the maximal numerical range induced by a positive operator $A$ has recently been introduced by Baklouti et al. in \cite{bakfeki01}. More precisely, we have the following definition.
\begin{dfn}
Let $T\in \mathbb{B}^A(\mathcal{H})$. The $A$-maximal numerical range of $T$, denoted by $W_{\max}^A(T)$, is defined as
\begin{align*}
W_{\max}^A(T)
&=\{\lambda\in \mathbb{C}\,;\,\exists\,(x_n)\subseteq \mathcal{H}\,;\,\|x_n\|_A=1,\lim_{n}\langle T x_n, x_n\rangle_A=\lambda,\\
&\phantom{++++++++++}\;\text{and}\;\displaystyle\lim_{n}\|Tx_n\|_A=\|T\|_A\;\}.
\end{align*}
\end{dfn}
\noindent
For every $T\in\bh$, it was shown in \cite{bakfeki01} that  $W_{\max}^A(T)$ is non-empty, convex and compact subset of $\mathbb{C}$.\\ Notice that the notion of the maximal numerical range of an operator $T\in\mathbb{B}(\mathcal{H })$, denoted by $W_{\max}(T)$ (that is when $A=I$; the identity operator), was first introduced by Stampfli in \cite{s1}, in order to determine the norm of the inner derivation acting on $\bh$. Recall that the inner derivation $\dt$ associated with $T\in \bh$  is defined by $$\dt:\bh\longrightarrow \bh,~X\longmapsto TX-XT.$$
For this, in the same paper \cite{s1}, the author first  established  the following.
\begin{thqt}\label{equi1970}
Let $T\in \mathbb{B}(\mathcal{H})$. Then the following conditions are equivalent:
\begin{itemize}
  \item  [(1)] $0\in W_{\max}(T)$,
  \item [(2)] $\|T\|^2+|\lambda|^2\leq \|T+\lambda\|^2$  for any $\lambda\in \mathbb{C}$,
  \item [(3)] $\|T\|\leq \|T+\lambda\|$ for any $\lambda\in \mathbb{C}$.
\end{itemize}
\end{thqt}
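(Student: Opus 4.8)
The plan is to prove the cyclic chain of implications $(1)\Rightarrow(2)\Rightarrow(3)\Rightarrow(1)$. The implication $(2)\Rightarrow(3)$ is immediate, since $\|T\|^2\le\|T\|^2+|\lambda|^2\le\|T+\lambda\|^2$ forces $\|T\|\le\|T+\lambda\|$. For $(1)\Rightarrow(2)$, I would fix $\lambda\in\mathbb{C}$ and use a sequence $(x_n)$ realizing $0\in W_{\max}(T)$, namely $\|x_n\|=1$, $\langle Tx_n,x_n\rangle\to 0$ and $\|Tx_n\|\to\|T\|$. Expanding the norm gives
$$\|(T+\lambda)x_n\|^2=\|Tx_n\|^2+2\operatorname{Re}\big(\overline{\lambda}\langle Tx_n,x_n\rangle\big)+|\lambda|^2,$$
and letting $n\to\infty$ the cross term vanishes while $\|Tx_n\|^2\to\|T\|^2$; since $\|T+\lambda\|^2\ge\|(T+\lambda)x_n\|^2$ for every $n$, passing to the limit yields $(2)$.

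The real work is in $(3)\Rightarrow(1)$, which I would establish by contradiction. Assume $\|T\|\le\|T+\lambda\|$ for all $\lambda$ but $0\notin W_{\max}(T)$. Since $W_{\max}(T)$ is nonempty, convex and compact, the origin can be strictly separated from it; after replacing $T$ by $e^{i\theta}T$ for a suitable $\theta$ (which rotates $W_{\max}(T)$ accordingly and leaves both $\|T\|$ and condition $(3)$ intact) I may assume there is $d>0$ with $\operatorname{Re}\mu\ge d$ for every $\mu\in W_{\max}(T)$. The goal is then to produce a small $t>0$ with $\|T-t\|<\|T\|$, contradicting $(3)$ at $\lambda=-t$.

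The crucial intermediate step is a uniform estimate: for a suitable $\delta>0$, every unit vector $x$ with $\|Tx\|^2>\|T\|^2-\delta$ satisfies $\operatorname{Re}\langle Tx,x\rangle\ge d/2$. I would prove this by contradiction using the definition of $W_{\max}(T)$: a sequence of unit vectors with $\|Tx_n\|\to\|T\|$ but $\operatorname{Re}\langle Tx_n,x_n\rangle<d/2$ would, after passing to a convergent subsequence of the bounded scalars $\langle Tx_n,x_n\rangle$, produce a point of $W_{\max}(T)$ with real part $\le d/2<d$, a contradiction. Granting this, for a unit vector $x$ I expand $\|(T-t)x\|^2=\|Tx\|^2-2t\operatorname{Re}\langle Tx,x\rangle+t^2$ and split into two cases. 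If $\|Tx\|^2\le\|T\|^2-\delta$, then the deficit $\delta$ dominates $2t\|T\|+t^2$ for small $t$; if $\|Tx\|^2>\|T\|^2-\delta$, then the estimate gives $\|(T-t)x\|^2\le\|T\|^2-td+t^2$. In both cases, for $t$ small enough the quantity stays strictly below $\|T\|^2$ uniformly in $x$, so taking the supremum yields $\|T-t\|<\|T\|$, the desired contradiction.

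I expect the main obstacle to be this uniform gap estimate and the case analysis that turns it into a strict decrease of the norm: one must argue that near-maximizers cannot escape the separating half-plane, which is exactly where the compactness of $W_{\max}(T)$ and the defining two-fold limit (simultaneous control of $\langle Tx_n,x_n\rangle$ and $\|Tx_n\|$) are used together. The rotation reduction and the verification that condition $(3)$ is preserved under multiplication by a unimodular scalar are the routine bookkeeping supporting this core argument.
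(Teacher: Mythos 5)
The paper does not prove this statement: it is quoted verbatim as a known result of Stampfli (Theorem~\ref{equi1970}, attributed to \cite{s1}) and is used later as a black box, so there is no internal proof to compare against. Your argument is correct and complete, and it is in substance a reconstruction of Stampfli's original proof. The implications $(1)\Rightarrow(2)\Rightarrow(3)$ are handled exactly as one would expect, via the expansion $\|(T+\lambda)x_n\|^{2}=\|Tx_n\|^{2}+2\operatorname{Re}\bigl(\overline{\lambda}\langle Tx_n,x_n\rangle\bigr)+|\lambda|^{2}$ along a maximizing sequence. The substantive step $(3)\Rightarrow(1)$ is also sound: the rotation reduction is legitimate because $W_{\max}(e^{i\theta}T)=e^{i\theta}W_{\max}(T)$ and $\|e^{i\theta}T+\lambda\|=\|T+e^{-i\theta}\lambda\|$, the strict separation of $0$ from the nonempty compact convex set $W_{\max}(T)$ gives the half-plane $\operatorname{Re}\mu\ge d$, and your uniform gap claim (near-maximizers of $\|Tx\|$ have $\operatorname{Re}\langle Tx,x\rangle\ge d/2$) is correctly established by extracting a convergent subsequence of the bounded scalars $\langle Tx_n,x_n\rangle$, which by definition produces a point of $W_{\max}(T)$ violating the separation. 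The two-case estimate then yields $\|T-t\|^{2}\le\|T\|^{2}-\min\{\delta/2,\,td/2\}<\|T\|^{2}$ for small $t>0$, contradicting $(3)$. No gaps; the only remark worth making is that your write-up supplies the compactness and sequential details that Stampfli's terse original leaves to the reader.
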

\noindent
Here $T+\lambda$ is denoted to be $T+\lambda I$ for any $\lambda \in \cit$.
\begin{corollary}\label{stamcoro}  Let $T \in \bh$. Then,  there is a unique scalar $	c_{T}$ such that
$$  \Vert T - c_{T}\Vert^2 +  \vert \lambda \vert^2 \leq   \Vert (T- c_{T}) - \lambda  \Vert^2,~ \text{for all}~    \lambda  \in  \mathbb{C}. $$
Moreover, $ 0\in W_{\max}(T)$ if and only if $c_{T}=0.$
\end{corollary}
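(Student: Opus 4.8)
The plan is to recognize that the asserted inequality for a scalar $c$ is, thanks to Theorem \ref{equi1970}, nothing but a reformulation of the statement that $c$ minimizes the map $c'\mapsto\|T-c'\|$ over $\cit$. Indeed, writing $(T-c)-\lambda=T-(c+\lambda)$, the inequality
$$\|T-c\|^2+|\lambda|^2\leq\|(T-c)-\lambda\|^2,\qquad\forall\,\lambda\in\cit,$$
is (after replacing $\lambda$ by $-\lambda$) exactly condition (2) of Theorem \ref{equi1970} applied to the operator $S:=T-c$, hence equivalent to $0\in W_{\max}(T-c)$ and, by condition (3), to $\|T-c\|\leq\|(T-c)+\lambda\|$ for all $\lambda$, i.e.\ to $c$ being a global minimizer of $c'\mapsto\|T-c'\|$. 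So the whole statement reduces to showing that such a minimizer exists and is unique.

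For existence, first I would observe that $f(c):=\|T-c\|$ is continuous on $\cit$ (it is $1$-Lipschitz by the reverse triangle inequality) and coercive, since $f(c)\geq|c|-\|T\|\to\infty$ as $|c|\to\infty$. A continuous coercive function on $\cit$ attains its infimum, so there is $c_T\in\cit$ with $f(c_T)\leq f(c)$ for every $c$. In particular $\|T-c_T\|\leq\|T-(c_T-\mu)\|=\|(T-c_T)+\mu\|$ for all $\mu\in\cit$, which is precisely condition (3) of Theorem \ref{equi1970} for $S=T-c_T$; applying the theorem yields condition (2), and a substitution $\lambda\mapsto-\lambda$ delivers the desired inequality.

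For uniqueness I would use a short symmetry argument that avoids any appeal to strict convexity of the norm. Suppose $c_1$ and $c_2$ both satisfy the inequality. Taking $\lambda=c_2-c_1$ in the inequality for $c_1$ gives $\|T-c_1\|^2+|c_1-c_2|^2\leq\|T-c_2\|^2$, and by symmetry $\|T-c_2\|^2+|c_1-c_2|^2\leq\|T-c_1\|^2$; adding these forces $|c_1-c_2|^2\leq 0$, whence $c_1=c_2$. Finally, specializing the inequality to $c=0$ recovers condition (2) of Theorem \ref{equi1970} for $T$ itself, equivalently $0\in W_{\max}(T)$; thus $c_T=0$ implies $0\in W_{\max}(T)$, and conversely $0\in W_{\max}(T)$ makes $c=0$ admissible, so uniqueness gives $c_T=0$. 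The only genuine point of care is the existence step --- confirming that the minimum is attained and translating the minimizing property through Theorem \ref{equi1970} --- whereas uniqueness collapses to the slick two-line computation above.
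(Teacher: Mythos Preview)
Your argument is correct. The paper itself does not supply a proof of this corollary: it is recorded in the preliminaries as a known consequence of Stampfli's Theorem~\ref{equi1970} (taken from \cite{s1}), accompanied only by the remark that $c_T$ is characterized as the unique minimizer of $\lambda\mapsto\|T-\lambda\|$. Your write-up fills in precisely the deduction the paper leaves implicit --- translating the Pythagorean inequality into conditions~(2) and~(3) of Theorem~\ref{equi1970} for $S=T-c$, handling existence via continuity and coercivity of $c\mapsto\|T-c\|$, and dispatching uniqueness by the symmetric substitution $\lambda=c_2-c_1$ followed by addition. The final equivalence $c_T=0\Leftrightarrow 0\in W_{\max}(T)$ then falls out of uniqueness exactly as you say. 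There is no gap.
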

\noindent
The scalar $c_{T}$ is called the center of mass of $T$. Note that $c_{T}$ is the unique scalar
  satisfying the following
\begin{equation*}
\Vert T-c_{T} \Vert =\inf_{\lambda \in \cit}  \Vert T- \lambda   \Vert.
\end{equation*}
The scalar $\nor{T-c(T)}$ is denoted by $d_{A}(T)$ and is called the distance of $T$ to scalars.
The author    \cite{s1} then proved that for any $T \in \bh$
$$ \nor{\dt} =2d(T).$$
\noindent
Recall that an operator $T\in \mathbb{B}(\mathcal{H})$ is said to be normaloid if $\omega(T)=\|T\|$, where $\omega(T)$ is denoted to be the numerical radius of $T$ which is given by
$$\omega(T)=\sup\{|\lambda|\,;\;\lambda\in W(T) \}.$$
Here $ W(T)$ is denoted to be the numerical range of $T$ and it is defined by Toeplitz in \cite{t1} as
$$W(T):=\{\langle T x, x\rangle;\;x \in \mathcal{H}\;\;\text{with}\;\|x\|=1\}.$$
Equivalent condition is $r(T)=\nor{T}$, see, \cite{Gustafson}. Here, $r(T)$ is the spectral radius of $T$. Recently, Spitkovsky in \cite{spitkovsky} gave the following characterization of a normaloid operator.
\begin{thqt}\label{spitkovsky}
Let $T\in \mathbb{B}(\mathcal{H})$. Then the following conditions are equivalent:
\begin{itemize}
  \item  [(1)] $T$ is a normaloid operator,
  \item [(2)] $W_{\max}(T)\cap \partial W(T)\neq\varnothing$.
\end{itemize}
\end{thqt}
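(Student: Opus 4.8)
The plan is to prove the two implications separately; $(1)\Rightarrow(2)$ is elementary, while $(2)\Rightarrow(1)$ is the substantial direction and is best handled by compressing $T$ to a two-dimensional subspace. Throughout I work with the compact convex set $\overline{W(T)}$ and use that $\partial W(T)=\partial\overline{W(T)}$ for a convex set, that $\omega(T)=\sup\{|z|:z\in W(T)\}$, and that normaloidness means $\omega(T)=\|T\|$.

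For $(1)\Rightarrow(2)$: assuming $\omega(T)=\|T\|$, I would choose unit vectors $x_n$ with $|\langle Tx_n,x_n\rangle|\to\|T\|$. The chain $|\langle Tx_n,x_n\rangle|\le\|Tx_n\|\le\|T\|$ then forces $\|Tx_n\|\to\|T\|$, and after passing to a subsequence $\langle Tx_n,x_n\rangle\to\lambda$ with $|\lambda|=\|T\|=\omega(T)$. Hence $\lambda\in W_{\max}(T)$, and since $\lambda$ is a point of maximal modulus of the compact convex set $\overline{W(T)}$ it cannot be interior, so $\lambda\in\partial\overline{W(T)}=\partial W(T)$.

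For $(2)\Rightarrow(1)$: fix $\lambda\in W_{\max}(T)\cap\partial W(T)$ with a witnessing sequence $x_n$ ($\|x_n\|=1$, $\langle Tx_n,x_n\rangle\to\lambda$, $\|Tx_n\|\to\|T\|=:M$). Writing $Tx_n=\mu_n x_n+r_n$ with $r_n\perp x_n$ and $\mu_n=\langle Tx_n,x_n\rangle$, one gets $\|r_n\|^2=\|Tx_n\|^2-|\mu_n|^2\to M^2-|\lambda|^2=:\rho^2$. If $\rho=0$ then $|\lambda|=M$, so $\omega(T)\ge|\lambda|=M$ and $T$ is normaloid. In the remaining case $\rho>0$ I would compress $T$ to $V_n=\operatorname{span}\{x_n,\widehat r_n\}$, $\widehat r_n=r_n/\|r_n\|$, obtaining $2\times2$ matrices $C_n=P_{V_n}T|_{V_n}=\bigl(\begin{smallmatrix}\mu_n & \alpha_n\\ \|r_n\| & \delta_n\end{smallmatrix}\bigr)$; all entries are bounded by $M$, so along a subsequence $C_n\to C=\bigl(\begin{smallmatrix}\lambda & \alpha\\ \rho & \delta\end{smallmatrix}\bigr)$. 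The crux is that this limiting matrix is \emph{normal}. Two facts feed into this. First, since $Tx_n\in V_n$ we have $\|C_nx_n\|=\|Tx_n\|\to M$ while $\|C_n\|\le\|T\|=M$, so $\|C\|=M$ and $e_1$ attains the norm of $C$, i.e.\ $e_1$ is a top eigenvector of $C^*C$, giving $\bar\alpha\lambda+\bar\delta\rho=0$. Second, by continuity $W(C)\subseteq\overline{W(T)}$ and $\lambda=\langle Ce_1,e_1\rangle\in\partial\overline{W(T)}$, so the supporting line of $\overline{W(T)}$ at $\lambda$, with outer normal $e^{i\theta}$, also supports $W(C)$ at $\lambda$; hence $e_1$ is a top eigenvector of the Hermitian matrix $\operatorname{Re}(e^{-i\theta}C)$, forcing its off-diagonal entry to vanish, i.e.\ $\alpha=-e^{2i\theta}\rho$ and in particular $|\alpha|=\rho$. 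Combining the two relations yields $\delta=e^{2i\theta}\bar\lambda$, and a direct check then gives $C^*C=CC^*$ (equivalently, $W(C)$ lies on a line). Thus $C$ is normal, so $\omega(C)=\|C\|=M$, and since $W(C)\subseteq\overline{W(T)}$ we conclude $\omega(T)\ge\omega(C)=M=\|T\|$, i.e.\ $T$ is normaloid.

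I expect the main obstacle to be exactly the normality of the limiting compression $C$: one must extract the correct scalar identity from the norm-attaining condition ($\bar\alpha\lambda+\bar\delta\rho=0$) and the correct one from the supporting-line condition at the boundary point $\lambda$ ($\alpha=-e^{2i\theta}\rho$), and verify that together they force $C^*C=CC^*$. The surrounding bookkeeping—passing to a subsequence so that all four entries of $C_n$ converge, justifying $W(C)\subseteq\overline{W(T)}$ in the limit, and working with $\overline{W(T)}$ so that the distinction between $\partial W(T)$ and $\partial\overline{W(T)}$ is harmless in infinite dimensions—is routine but should be stated cleanly.
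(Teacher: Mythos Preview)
The paper does not prove Theorem~\ref{spitkovsky}; it is quoted from \cite{spitkovsky} and used only as a black box in the $(2)\Rightarrow(1)$ direction of the $A$-analogue, where the hypothesis is transported to $\widehat{T}\in\mathbb{B}(\mathbf{R}(A^{1/2}))$ via Lemmas~\ref{imporeq2009} and~\ref{lemmamaximal} and Spitkovsky's result is then invoked. So there is no proof in the paper to compare your argument against.

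That said, your self-contained argument is correct. Your $(1)\Rightarrow(2)$ is exactly the mechanism behind Theorem~\ref{newcarac} together with Lemma~\ref{lemmanew23112019} in the $A$-setting: a point of maximal modulus in $\overline{W(T)}$ is forced by Cauchy--Schwarz to lie in $W_{\max}(T)$, and it cannot be interior. For $(2)\Rightarrow(1)$, the $2\times2$ compression works as you describe. The two scalar identities you isolate are the right ones: $\bar{\alpha}\lambda+\bar{\delta}\rho=0$ from $C^*Ce_1=M^2e_1$, and $\alpha=-e^{2i\theta}\rho$ from $e_1$ being a top eigenvector of $\operatorname{Re}(e^{-i\theta}C)$; substituting gives $\delta=e^{2i\theta}\bar{\lambda}$, and then $C^*C=CC^*=M^2I$, so $C$ is a scalar multiple of a unitary with $\omega(C)=\|C\|=M$. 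Since $W(C_n)\subseteq W(T)$ (compressions shrink the numerical range) and $C_n\to C$ entrywise, $W(C)\subseteq\overline{W(T)}$, whence $\omega(T)\ge M$. The housekeeping points you flag (restrict to large $n$ so $\|r_n\|>0$; work with $\partial\overline{W(T)}=\partial W(T)$ by convexity) are all in order. This is strictly more than the present paper supplies for this particular statement.
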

\noindent
Here $\partial L$ stands for the boundary of a subsest $L$ in the complex plane.
\vspace{0.3cm}

\noindent
Notions of the numerical range and numerical radius are generalized in \cite{bakfeki01} as follows.
\begin{dfn}
Let $T \in \bh$. The $A$-numerical range and the $A$-numerical radius of $T$ are respectively given by
$$W_A(T):=\{\langle T x, x\rangle_A\,;\;x \in \mathcal{H}\;\;\text{with}\;\|x\|_A=1\}, $$
and
$$\omega_A(T):=\sup\{|\lambda|\,;\;\lambda\in W_A(T)\}.$$
\end{dfn}
\noindent
It is important to mention that $\omega_A(T)$ may be equal to $+ \infty$ for some $T\in \mathcal{B}(\mathcal{H})$ (see \cite{feki01}). However, $\omega_A(\cdot)$ defines a seminorm on $\bad$ which is equivalent to $\nor{T}_{A}$. More precisely, for any $T\in \bad$, we have
\begin{equation}\label{inequa}
\frac{1}{2}\nora{T}\leq \omega_{A}(T) \leq \nor{T}_{A},
\end{equation}
see \cite{bakfeki01}.\\
Recently, the concept of $A$-normaloid operators is  introduced by the third author in \cite{feki01} as follows.
\begin{dfn}
An operator $T\in \mathbb{B}_{A^{1/2}}(\mathcal{H})$ is said to be $A$-normaloid if $r_A(T)=\|T\|_A$, where
$$r_A(T)=\displaystyle\lim_{n}\|T^n\|_A^{\frac{1}{n}}.$$
\end{dfn}
\noindent
Some characterizations of $A$-normaloid operators are proved in \cite{feki01}. In particular, we have the following proposition.
\begin{propqt}[\cite{feki01}]\label{charactenormailoid}
Let $T\in \mathbb{B}_{A^{1/2}}(\mathcal{H})$. Then, the following assertions are equivalent:
\begin{itemize}
  \item [(1)] $T$ is $A$-normaloid,
  \item [(2)] $\|T^n\|_A=\|T\|_A^n$ for all positive integer $n$,
  \item [(3)] $\omega_A(T)=\|T\|_A$,
    \item [(4)] There exists a sequence $(x_n)\subseteq\mathcal{H}$ such that $\|x_n\|_A=1$.
    \begin{equation*}
    \lim_{n}\|T x_n\|_A=\|T\|_A \;\text{  and  }\; \lim_{n}|\langle T x_n, x_n\rangle_A|=\omega_A(T).
    \end{equation*}
\end{itemize}
\end{propqt}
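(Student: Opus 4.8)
The plan is to prove the block $(1)\Leftrightarrow(2)\Leftrightarrow(3)$ and then handle $(3)\Leftrightarrow(4)$ separately. For $(1)\Leftrightarrow(2)$ I would use only the submultiplicativity $\|TS\|_A\le\|T\|_A\|S\|_A$ recorded in the preliminaries: it makes $n\mapsto\log\|T^n\|_A$ subadditive, so that by Fekete's lemma the defining limit $r_A(T)=\lim_n\|T^n\|_A^{1/n}$ equals $\inf_n\|T^n\|_A^{1/n}$, while the same submultiplicativity gives $\|T^n\|_A\le\|T\|_A^n$. Hence $r_A(T)\le\|T^n\|_A^{1/n}\le\|T\|_A$ for every $n$. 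If $r_A(T)=\|T\|_A$, the squeeze forces $\|T^n\|_A=\|T\|_A^n$ for all $n$, which is $(2)$; conversely $(2)$ gives $r_A(T)=\lim_n\|T\|_A=\|T\|_A$, which is $(1)$.

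To move between the radius and the $A$-numerical radius I would lean on the two-sided estimate \eqref{inequa}. For $(2)\Rightarrow(3)$, apply the left inequality in \eqref{inequa} to the power $T^n$ (allowed since $\bad$ is an algebra) and combine it with the power inequality $\omega_A(T^n)\le\omega_A(T)^n$ for the $A$-numerical radius, which is known: this yields $\tfrac12\|T\|_A^n\le\omega_A(T^n)\le\omega_A(T)^n$, so $\omega_A(T)\ge 2^{-1/n}\|T\|_A\to\|T\|_A$, and with $\omega_A(T)\le\|T\|_A$ we obtain $(3)$. For the return $(3)\Rightarrow(1)$ I would use approximate eigenvectors: choose unit vectors $x_n$ with $|\langle Tx_n,x_n\rangle_A|\to\omega_A(T)=\|T\|_A$; since $|\langle Tx_n,x_n\rangle_A|\le\|Tx_n\|_A\le\|T\|_A$ by the $A$-Cauchy--Schwarz inequality, a squeeze gives $\|Tx_n\|_A\to\|T\|_A$, whence, writing $\mu_n=\langle Tx_n,x_n\rangle_A$, one gets $\|Tx_n-\mu_n x_n\|_A^2=\|Tx_n\|_A^2-|\mu_n|^2\to0$. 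Passing to a subsequence with $\mu_n\to\mu$ and $|\mu|=\|T\|_A$, the identity $T^k x_n-\mu^k x_n=\sum_{j=0}^{k-1}\mu^{\,j}T^{\,k-1-j}(Tx_n-\mu x_n)$ together with $\|T^{\,k-1-j}\|_A\le\|T\|_A^{\,k-1-j}$ gives $\|T^k\|_A\ge|\mu|^k=\|T\|_A^k$ in the limit, so $r_A(T)\ge\|T\|_A$ and $T$ is $A$-normaloid.

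The implication $(3)\Rightarrow(4)$ is then immediate, since the sequence just produced already satisfies $\|x_n\|_A=1$, $\|Tx_n\|_A\to\|T\|_A$ and $|\langle Tx_n,x_n\rangle_A|\to\omega_A(T)$, and hence witnesses $(4)$.

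The step I expect to be the main obstacle is the converse $(4)\Rightarrow(3)$. Here one is handed a sequence that is simultaneously norm- and numerical-radius-maximizing and must deduce $\omega_A(T)=\|T\|_A$. With $\mu_n=\langle Tx_n,x_n\rangle_A$ the same computation gives $\|Tx_n-\mu_n x_n\|_A^2=\|Tx_n\|_A^2-|\mu_n|^2\to\|T\|_A^2-\omega_A(T)^2$, so the residual vanishes exactly when $\omega_A(T)=\|T\|_A$, and the whole task is to exclude a persistent nonzero residual, i.e. the case $\|T\|_A>\omega_A(T)$. The difficulty is that the two extremal conditions look unrelated: $\|Tx_n\|_A\to\|T\|_A$ merely says that $x_n$ is an approximate eigenvector of the positive operator $T^{\sharp_A}T$ at its top value $\|T\|_A^2$, which every operator achieves along some sequence, whereas $|\langle Tx_n,x_n\rangle_A|\to\omega_A(T)$ only locates a limit point $\mu\in\maxat$ with $|\mu|=\omega_A(T)$. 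To couple them I would rotate so that $\mu\ge0$ and pass to the $A$-real part $\tfrac12\big(e^{-i\arg\mu}T+(e^{-i\arg\mu}T)^{\sharp_A}\big)$, whose largest $A$-eigenvalue is $\omega_A(T)$ and which the vectors $x_n$ approximately maximize; the crux is to show that maximizing this $A$-self-adjoint part and maximizing $T^{\sharp_A}T$ along one common sequence forces the extremal values $\omega_A(T)$ and $\|T\|_A$ to agree. Equivalently, one must show that $\maxat$ can meet the circle $|\lambda|=\omega_A(T)$ only when $\omega_A(T)=\|T\|_A$, which is precisely the normaloid phenomenon underlying the paper's extension of Spitkovsky's theorem.
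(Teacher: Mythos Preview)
The paper does not contain its own proof of this proposition: it is quoted verbatim from \cite{feki01} and used as a black box. Consequently there is nothing in the present paper to compare your argument against.

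That said, your proposal is not a complete proof. The block $(1)\Leftrightarrow(2)\Leftrightarrow(3)$ and the forward implication $(3)\Rightarrow(4)$ are fine (the appeal to the $A$-power inequality $\omega_A(T^n)\le\omega_A(T)^n$ in $(2)\Rightarrow(3)$ is legitimate, as it follows from Lemma~\ref{imporeq2009}(ii) together with Berger's classical power inequality applied to $\widehat{T}$). The genuine gap is exactly where you flag it: your treatment of $(4)\Rightarrow(3)$ stops at a reformulation. You correctly reduce the question to showing that $W_{\max}^A(T)$ can meet the circle $\{|\lambda|=\omega_A(T)\}$ only when $\omega_A(T)=\|T\|_A$, but you do not prove this; you merely observe that it is ``precisely the normaloid phenomenon underlying the paper's extension of Spitkovsky's theorem.'' That is circular here: the paper \emph{derives} its Spitkovsky-type Theorem~2.4 (in the direction $(1)\Rightarrow(2)$, via Theorem~\ref{newcarac}) by invoking Proposition~\ref{charactenormailoid}, so you cannot cite it to finish the proposition.

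A non-circular way to close the gap is to lift to $\mathbf{R}(A^{1/2})$: condition~(4) for $T$ transfers, via \eqref{usefuleq001}--\eqref{usefuleq01} and Lemma~\ref{imporeq2009}, to the same condition for $\widehat{T}$ on an honest Hilbert space, which says $\omega_{\max}(\widehat{T})=\omega(\widehat{T})$; the classical result of Chan--Chan \cite{chanchan} then gives that $\widehat{T}$ is normaloid, and Lemma~\ref{imporeq2009} (plus $\widehat{T^n}=\widehat{T}^{\,n}$) brings this back to $T$. Your outline with $A$-real parts does not, by itself, force the conclusion: having $(x_n)$ simultaneously approximate the top of $H=\tfrac12(T+T^{\sharp_A})$ and of $T^{\sharp_A}T$ only yields $\|Kx_n\|_A^2\to\|T\|_A^2-\omega_A(T)^2$ and $\langle Kx_n,x_n\rangle_A\to 0$ for $K=\tfrac{1}{2i}(T-T^{\sharp_A})$, which is not yet a contradiction. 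The missing input is precisely the Chan--Chan argument.
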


\noindent
Our aim in this work is to give some new characterizations of $A$-normaloid operators. Mainly, by considering the operator range $\mathcal{R}(A^{1/2})$ endowed with its canonical Hilbertian structure, which will be denoted by $\mathbf{R}(A^{1/2})$, and then using the connection between $A$-bounded operators and operators acting on the Hilbert space $\mathbf{R}(A^{1/2})$, we extend Theorem \ref{spitkovsky} to the context of semi-Hilbert spaces. Moreover, several new properties concerning the $A$-maximal numerical range of $A$-bounded operators are established. One main target of this article is to generalize Theorem \ref{equi1970} for $T\in \bad$. In addition, we give a sufficient and necessary condition for which the $A$-center of mass of an operator $T\in \mathbb{B}_{A^{1/2}}(\mathcal{H})$ belongs to $W_{\max}^A(T)$. Other properties are also studied.

In the sequel, if $T$ is any operator in $\bad$, we define
$$\Gamma_A(T):=\big\{z\in \mathbb{C}\,;\;|z|=\|T\|_A \big\}.$$

\section{Main Results}\label{s2}
We begin this section with the following theorem which gives another useful characterization of $A$-normaloid operators. We will denote by $\overline{L}$ the closure of any subset $L$ in the complex plane.
\begin{theorem}\label{newcarac}
Let $T\in\mathbb{B}_{A^{1/2}}(\mathcal{H})$. Then,
\begin{itemize}
  \item [(1)] $T$ is $A$-normaloid,
  \item [(2)] $\Gamma_A(T)\cap \overline{W_A(T)}\neq \varnothing$.
\end{itemize}
\end{theorem}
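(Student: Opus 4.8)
The plan is to reduce the whole statement to the equivalence (1)\,$\Leftrightarrow$\,(3) of Proposition \ref{charactenormailoid}, namely that $T$ is $A$-normaloid precisely when $\omega_A(T)=\|T\|_A$. Once that is in hand, the theorem becomes the purely geometric observation that the circle $\Gamma_A(T)=\{z\in\mathbb{C}\,;\,|z|=\|T\|_A\}$ meets $\overline{W_A(T)}$ exactly when the largest modulus attained on $\overline{W_A(T)}$ equals $\|T\|_A$.

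First I would record that, since $T\in\bad$, the bound $\omega_A(T)\leq\|T\|_A<\infty$ coming from \eqref{inequa} forces $W_A(T)$ to be a bounded subset of $\mathbb{C}$; hence $\overline{W_A(T)}$ is compact, and because $z\mapsto|z|$ is continuous one has
$$\sup\{|z|\,;\,z\in\overline{W_A(T)}\}=\sup\{|z|\,;\,z\in W_A(T)\}=\omega_A(T),$$
the supremum being attained at some point of $\overline{W_A(T)}$.

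For the implication (1)\,$\Rightarrow$\,(2), assume $T$ is $A$-normaloid, so that $\omega_A(T)=\|T\|_A$. By the previous step there exists $\lambda_0\in\overline{W_A(T)}$ with $|\lambda_0|=\omega_A(T)=\|T\|_A$; concretely one chooses $\lambda_n\in W_A(T)$ with $|\lambda_n|\to\|T\|_A$ and extracts a convergent subsequence using boundedness. By definition $\lambda_0\in\Gamma_A(T)$, whence $\Gamma_A(T)\cap\overline{W_A(T)}\neq\varnothing$. Conversely, for (2)\,$\Rightarrow$\,(1), take any $\lambda\in\Gamma_A(T)\cap\overline{W_A(T)}$. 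Then $|\lambda|=\|T\|_A$ and $\lambda\in\overline{W_A(T)}$, so $\omega_A(T)\geq|\lambda|=\|T\|_A$; combined with $\omega_A(T)\leq\|T\|_A$ this yields $\omega_A(T)=\|T\|_A$, and Proposition \ref{charactenormailoid} then gives that $T$ is $A$-normaloid.

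The only step requiring any care is the attainment argument of the first paragraph, that is, passing from the supremum $\omega_A(T)$ to an actual point of $\overline{W_A(T)}$ realising it. This is precisely where the boundedness of $W_A(T)$ (equivalently, the finiteness of $\|T\|_A$ for $A$-bounded $T$) is used, via Bolzano--Weierstrass; everything else is a direct manipulation of the definitions of $\Gamma_A(T)$, $W_A(T)$ and $\omega_A(T)$ together with the known characterization of $A$-normaloid operators.
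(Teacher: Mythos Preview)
Your proof is correct and follows essentially the same approach as the paper: both directions rest on Proposition~\ref{charactenormailoid} (the equivalence $T$ $A$-normaloid $\Leftrightarrow$ $\omega_A(T)=\|T\|_A$), a compactness/subsequence argument on $\overline{W_A(T)}$ for $(1)\Rightarrow(2)$, and the inequality $\omega_A(T)\le\|T\|_A$ from \eqref{inequa} for $(2)\Rightarrow(1)$. Your write-up is slightly more explicit about why $\overline{W_A(T)}$ is compact, but the underlying argument is the same.
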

\begin{proof}
$(1)\Rightarrow(2)$: Assume that $T$ is $A$-normaloid. Then, by Proposition \ref{charactenormailoid} we have $\omega_A(T)=\|T\|_A$. So, there exists a sequence $(z_n)\subseteq W_A(T)$ such that $\displaystyle\lim_{n}|z_n|=\|T\|_A$. By compactness of  $\overline{W_A(T)}$ we can,  taking a subsequence of $(z_n)$ if  needed,  assume that $(z_n)$ converges to some $z\in \overline{W_A(T)}$. Therefore, $|z|=\|T\|_A$, so $z\in \Gamma_A(T)\cap \overline{W_A(T)}$.\\
$(2)\Rightarrow(1)$: Let $z\in \Gamma_A(T)\cap \overline{W_A(T)}$. We have $\omega_{A}(T) \geq \modu{z}=\nor{T}_{A}$. From  Inequalities \eqref{inequa}, we deduce that $\omega_{A}(T) =\nora{T}$. That is,  $T$ is $A$-normaloid.
\end{proof}

\noindent

\noindent
Now, we aim to generalize Theorems \ref{equi1970} and \ref{spitkovsky} for $T\in \bad$. To accomplish this goal, some facts from \cite{acg3} should be recalled. Let $X=\mathcal{H}/\mathcal{N}(A)$ be the quotient space of $\mathcal{H}$ by $\mathcal{N}(A)$. It can be observed that $\langle\cdot,\cdot\rangle_A$ induces on $X$ the following inner product:
$$[\overline{x},\overline{y}]=\langle x, y\rangle_A=\langle Ax, y\rangle,$$
for every $\overline{x},\overline{y}\in X$. We note that $(X,[\cdot,\cdot])$ is not complete unless $\mathcal{R}(A)$ is a closed subspace in $\mathcal{H}$. However, de Branges et al. proved in \cite{branrov} (see also \cite{f4}) that the completion of $X$ under the inner product $[\cdot,\cdot]$ is isomorphic to the Hilbert space $\mathcal{R}(A^{1/2})$ endowed with the following inner product:
$$(A^{1/2}x,A^{1/2}y):=\langle Px, Py\rangle,\;\forall\, x,y \in \mathcal{H},$$ where $P$ stands for the orthogonal projection of $\mathcal{H}$ onto the closure of $\mathcal{R}(A)$.\\
From now on, the Hilbert space $\left(\mathcal{R}(A^{1/2}), (\cdot,\cdot)\right)$ will be simply denoted by $\mathbf{R}(A^{1/2})$. Further, the symbol $\|\cdot\|_{\mathbf{R}(A^{1/2})}$ represents the norm induced by $(\cdot,\cdot)$. It is crucial to note that $\mathcal{R}(A)$ is dense
in $\mathbf{R}(A^{1/2})$ (see \cite{feki01}). Since $\mathcal{R}(A)\subseteq \mathcal{R}(A^{1/2})$, then we see that
\begin{equation}\label{usefuleq001}
( Ax,Ay)= (A^{1/2} A^{1/2}x,A^{1/2}A^{1/2}y)= \langle PA^{1/2}x, PA^{1/2}y\rangle=\langle x , y\rangle_A,\quad\forall\,x,y\in \mathcal{H},
\end{equation}
whence,
\begin{equation}\label{usefuleq01}
\|Ax\|_{\mathbf{R}(A^{1/2})}=\|x\|_A,
\end{equation}
for any $x \in \h$. For more information concerning the Hilbert space $\mathbf{R}(A^{1/2})$, the interested reader is referred to \cite{acg3}. \\
Let us consider now the operator $Z_A$ defined by:
$$Z_A:\h \longrightarrow \mathbf{R}(A^{1/2}),~x\longmapsto Z_Ax=Ax.$$
Further, the following useful proposition is stated in \cite{acg3}.
\begin{propqt}\label{prop_arias}
Let $T\in \mathbb{B}(\mathcal{H})$. Then $T\in \mathbb{B}_{A^{1/2}}(\mathcal{H})$ if and only if there exists a unique $\widehat{T}\in \mathbb{B}(\mathbf{R}(A^{1/2}))$ such that $Z_AT =\widehat{T}Z_A$.
\end{propqt}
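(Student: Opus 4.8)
The plan is to construct $\widehat{T}$ explicitly on the dense subspace $\mathcal{R}(A)\subseteq\mathbf{R}(A^{1/2})$ and then extend it by continuity. For the forward implication, assume $T\in\mathbb{B}_{A^{1/2}}(\mathcal{H})$, so that there is $\lambda>0$ with $\|Tx\|_A\leq\lambda\|x\|_A$ for all $x\in\mathcal{H}$. The intertwining relation $Z_AT=\widehat{T}Z_A$ forces $\widehat{T}(Ax)=ATx$ for $x\in\mathcal{H}$, so I would take this formula as the definition of $\widehat{T}$ on $\mathcal{R}(A)$. First I would check that it is well defined: if $Ax=Ay$, then $x-y\in\mathcal{N}(A)$, whence $\|x-y\|_A=0$, and the $A$-boundedness inequality yields $\|Tx-Ty\|_A\leq\lambda\|x-y\|_A=0$; thus $Tx-Ty\in\mathcal{N}(A)$ and $ATx=ATy$. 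Next, using the isometric identity \eqref{usefuleq01} twice, I would estimate
$$\|\widehat{T}(Ax)\|_{\mathbf{R}(A^{1/2})}=\|ATx\|_{\mathbf{R}(A^{1/2})}=\|Tx\|_A\leq\lambda\|x\|_A=\lambda\|Ax\|_{\mathbf{R}(A^{1/2})},$$
so that $\widehat{T}$ is a bounded linear map on $\mathcal{R}(A)$ with operator norm at most $\lambda$.

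Since $\mathcal{R}(A)$ is dense in $\mathbf{R}(A^{1/2})$, the standard extension-by-continuity of a bounded operator from a dense subspace provides a unique $\widehat{T}\in\mathbb{B}(\mathbf{R}(A^{1/2}))$ extending the map above, and by construction it satisfies $Z_ATx=ATx=\widehat{T}(Ax)=\widehat{T}Z_Ax$ for every $x$, i.e. $Z_AT=\widehat{T}Z_A$. Uniqueness is immediate from the same density: the range of $Z_A$ is exactly $\mathcal{R}(A)$, so any two solutions of $Z_AT=\widehat{T}Z_A$ coincide on $\mathcal{R}(A)$ and hence everywhere by continuity.

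For the converse, suppose such a $\widehat{T}$ exists. For an arbitrary $x\in\mathcal{H}$, I would combine \eqref{usefuleq01} with the boundedness of $\widehat{T}$ to obtain
$$\|Tx\|_A=\|ATx\|_{\mathbf{R}(A^{1/2})}=\|\widehat{T}(Ax)\|_{\mathbf{R}(A^{1/2})}\leq\|\widehat{T}\|\,\|Ax\|_{\mathbf{R}(A^{1/2})}=\|\widehat{T}\|\,\|x\|_A,$$
which is precisely the defining inequality of $\mathbb{B}_{A^{1/2}}(\mathcal{H})$ (taking any $\lambda\geq\|\widehat{T}\|$, say $\lambda=\|\widehat{T}\|+1$ to ensure $\lambda>0$). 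Hence $T\in\mathbb{B}_{A^{1/2}}(\mathcal{H})$.

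The one genuinely delicate point is the well-definedness of $\widehat{T}$ on $\mathcal{R}(A)$, and this is exactly where the $A$-boundedness hypothesis enters in the forward direction: it is what guarantees that $T$ respects the quotient by $\mathcal{N}(A)$, so that the assignment $Ax\mapsto ATx$ does not depend on the representative $x$. Once this is settled, the remaining ingredients—boundedness on the dense subspace, extension by continuity, uniqueness, and the converse estimate—are all routine consequences of the isometry \eqref{usefuleq01} and the density of $\mathcal{R}(A)$ in $\mathbf{R}(A^{1/2})$.
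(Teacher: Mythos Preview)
The paper does not supply its own proof of this proposition; it is quoted from \cite{acg3} and used as a black box. Your argument is correct and is in fact the standard one: define $\widehat{T}$ on the dense subspace $\mathcal{R}(A)=\mathcal{R}(Z_A)$ by the forced formula $\widehat{T}(Ax)=ATx$, use the $A$-boundedness of $T$ to check well-definedness and boundedness via the isometry \eqref{usefuleq01}, extend by continuity, and read off uniqueness from density; the converse is the immediate estimate you wrote. There is nothing to compare against in the present paper, and your proof would serve perfectly well as a self-contained justification of Proposition~\ref{prop_arias}.
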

\noindent
Before we move on, it is important to state the following lemmas. The proof of the first one can be found in  \cite{feki01}.
\begin{lemma}\label{imporeq2009}
Let $T\in \mathbb{B}_{A^{1/2}}(\mathcal{H})$. Then
\begin{itemize}
  \item [(i)] $\|T\|_A=\|\widehat{T}\|_{\mathbb{B}(\mathbf{R}(A^{1/2}))}$.
  \item [(ii)] $\omega_A(T)=\omega(\widehat{T})$.
\end{itemize}
\end{lemma}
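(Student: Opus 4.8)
The plan is to transport both quantities from the semi-Hilbert setting into the genuine Hilbert space $\mathbf{R}(A^{1/2})$ by means of the map $Z_A$ and the intertwining relation $Z_A T=\widehat{T}Z_A$ furnished by Proposition \ref{prop_arias}, which spelled out reads $\widehat{T}(Ax)=A(Tx)$ for all $x\in\h$. The two algebraic ingredients that make everything work are the isometry \eqref{usefuleq01}, namely $\|Ax\|_{\mathbf{R}(A^{1/2})}=\|x\|_A$, and the inner-product compatibility \eqref{usefuleq001}, namely $(Ax,Ay)=\langle x,y\rangle_A$. The only analytic input beyond these is the density of $\mathcal{R}(A)$ in $\mathbf{R}(A^{1/2})$, already recorded in the discussion above.

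For (i), I would first observe that since $\widehat{T}$ is a bounded operator on $\mathbf{R}(A^{1/2})$ and $\mathcal{R}(A)$ is a dense linear subspace, the operator norm of $\widehat{T}$ may be computed by testing only on unit vectors lying in $\mathcal{R}(A)$; indeed, normalizing an approximating sequence shows that the unit vectors of $\mathcal{R}(A)$ are dense in the unit sphere of $\mathbf{R}(A^{1/2})$, and continuity of $u\mapsto\|\widehat{T}u\|_{\mathbf{R}(A^{1/2})}$ then yields $\|\widehat{T}\|=\sup\{\|\widehat{T}(Ax)\|_{\mathbf{R}(A^{1/2})}\,;\,\|Ax\|_{\mathbf{R}(A^{1/2})}=1\}$. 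Using $\widehat{T}(Ax)=A(Tx)$ together with \eqref{usefuleq01} applied twice, the right-hand side becomes $\sup\{\|Tx\|_A\,;\,\|x\|_A=1\}=\|T\|_A$, which is exactly the asserted identity.

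For (ii), I would run the identical reduction for the numerical radius. The quadratic form $u\mapsto(\widehat{T}u,u)$ is continuous on $\mathbf{R}(A^{1/2})$, so $\omega(\widehat{T})$ is again unchanged if the supremum is restricted to unit vectors of the dense subspace $\mathcal{R}(A)$. Writing such vectors as $Ax$ with $\|x\|_A=1$ and invoking the intertwining relation, I would compute $(\widehat{T}(Ax),Ax)=(A(Tx),Ax)=\langle Tx,x\rangle_A$ by \eqref{usefuleq001}; taking moduli and suprema gives $\omega(\widehat{T})=\sup\{\modu{\langle Tx,x\rangle_A}\,;\,\|x\|_A=1\}=\omega_A(T)$.

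The main obstacle here is not algebraic but lies entirely in the density/continuity passage: one must justify that restricting each supremum to the dense subspace $\mathcal{R}(A)$ does not decrease it. This is harmless because both $u\mapsto\|\widehat{T}u\|_{\mathbf{R}(A^{1/2})}$ and $u\mapsto\modu{(\widehat{T}u,u)}$ are continuous and, after rescaling a convergent approximating sequence, the unit vectors of $\mathcal{R}(A)$ are dense in the unit sphere of $\mathbf{R}(A^{1/2})$. Once this is granted, both identities are immediate consequences of the two compatibility formulas \eqref{usefuleq001} and \eqref{usefuleq01}.
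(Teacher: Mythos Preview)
The paper does not give a proof of this lemma at all; it simply records that ``the proof of the first one can be found in \cite{feki01}.'' Your argument is correct and is precisely the expected one: use the intertwining relation $\widehat{T}(Ax)=A(Tx)$ together with the isometric identities \eqref{usefuleq001}--\eqref{usefuleq01} to translate both the $A$-seminorm and the $A$-quadratic form to $\mathbf{R}(A^{1/2})$, and then invoke density of $\mathcal{R}(A)$ in $\mathbf{R}(A^{1/2})$ to pass from the subspace to the whole space. Your explicit justification of the density/continuity step (normalizing an approximating sequence so that unit vectors of $\mathcal{R}(A)$ are dense in the unit sphere) is exactly what is needed, and is in fact the same idea the paper later spells out in the converse direction of Lemma~\ref{lemmamaximal}. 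There is nothing to correct.
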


\begin{lemma}\label{lemmamaximal}
Let $T\in\mathbb{B}_{A^{1/2}}(\mathcal{H})$. Then
\begin{equation*}
W_{\max}^A(T)=W_{\max}(\widehat{T}),
\end{equation*}
where $\widehat{T}$ is the operator given by Proposition \ref{prop_arias}.
\end{lemma}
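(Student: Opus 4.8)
The plan is to transport both defining conditions through the intertwining map $Z_A\colon\h\to\mathbf{R}(A^{1/2})$, $Z_Ax=Ax$, exploiting that $Z_A$ preserves the relevant norms and inner products and that its range $\mathcal{R}(A)$ is dense in $\mathbf{R}(A^{1/2})$. First I would record the transfer identities on which everything rests. Fix $x\in\h$ with $\|x\|_A=1$ and put $u=Z_Ax=Ax$; by \eqref{usefuleq01} we have $\|u\|_{\mathbf{R}(A^{1/2})}=\|x\|_A=1$, so $u$ is a unit vector of $\mathbf{R}(A^{1/2})$. Using $Z_AT=\widehat{T}Z_A$ from Proposition \ref{prop_arias} together with \eqref{usefuleq001} and \eqref{usefuleq01},
$$(\widehat{T}u,u)=(\widehat{T}Z_Ax,Z_Ax)=(Z_ATx,Z_Ax)=(ATx,Ax)=\langle Tx,x\rangle_A,$$
and likewise
$$\|\widehat{T}u\|_{\mathbf{R}(A^{1/2})}=\|Z_ATx\|_{\mathbf{R}(A^{1/2})}=\|ATx\|_{\mathbf{R}(A^{1/2})}=\|Tx\|_A.$$
Finally Lemma \ref{imporeq2009}(i) gives $\|\widehat{T}\|_{\mathbb{B}(\mathbf{R}(A^{1/2}))}=\|T\|_A$.

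The inclusion $\maxat\subseteq W_{\max}(\widehat{T})$ is then immediate. If $\lambda\in\maxat$ is witnessed by a sequence $(x_n)$ with $\|x_n\|_A=1$, $\langle Tx_n,x_n\rangle_A\to\lambda$ and $\|Tx_n\|_A\to\|T\|_A$, then $u_n:=Z_Ax_n$ is a sequence of unit vectors of $\mathbf{R}(A^{1/2})$ for which the identities above yield $(\widehat{T}u_n,u_n)\to\lambda$ and $\|\widehat{T}u_n\|_{\mathbf{R}(A^{1/2})}\to\|T\|_A=\|\widehat{T}\|$. Hence $\lambda\in W_{\max}(\widehat{T})$.

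The reverse inclusion is where the only genuine difficulty lies, and I would resolve it by a density argument. Given $\lambda\in W_{\max}(\widehat{T})$, choose unit vectors $(u_n)\subseteq\mathbf{R}(A^{1/2})$ with $(\widehat{T}u_n,u_n)\to\lambda$ and $\|\widehat{T}u_n\|_{\mathbf{R}(A^{1/2})}\to\|\widehat{T}\|$. The obstruction is that these $u_n$ need not lie in $\mathcal{R}(A)=Z_A(\h)$, so they cannot be pulled back to $\h$ directly. Since $\mathcal{R}(A)$ is dense in $\mathbf{R}(A^{1/2})$, I pick $v_n=Ax_n\in\mathcal{R}(A)$ with $\|v_n-u_n\|_{\mathbf{R}(A^{1/2})}<1/n$; then $\|v_n\|_{\mathbf{R}(A^{1/2})}\to1$, so after replacing $x_n$ by $x_n':=x_n/\|v_n\|_{\mathbf{R}(A^{1/2})}$ the vector $w_n:=Ax_n'$ satisfies $\|w_n\|_{\mathbf{R}(A^{1/2})}=1$ (equivalently $\|x_n'\|_A=1$ by \eqref{usefuleq01}) while still $w_n\to u_n$. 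Boundedness of $\widehat{T}$ and the continuity of the inner product give the elementary estimates $|(\widehat{T}w_n,w_n)-(\widehat{T}u_n,u_n)|\le 2\|\widehat{T}\|\,\|w_n-u_n\|_{\mathbf{R}(A^{1/2})}$ and $\big|\,\|\widehat{T}w_n\|_{\mathbf{R}(A^{1/2})}-\|\widehat{T}u_n\|_{\mathbf{R}(A^{1/2})}\,\big|\le\|\widehat{T}\|\,\|w_n-u_n\|_{\mathbf{R}(A^{1/2})}$, both of which tend to $0$; hence $(\widehat{T}w_n,w_n)\to\lambda$ and $\|\widehat{T}w_n\|_{\mathbf{R}(A^{1/2})}\to\|\widehat{T}\|$ are preserved.

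It then remains to run the transfer identities backwards for $w_n=Z_Ax_n'$: they give $\langle Tx_n',x_n'\rangle_A=(\widehat{T}w_n,w_n)\to\lambda$ and $\|Tx_n'\|_A=\|\widehat{T}w_n\|_{\mathbf{R}(A^{1/2})}\to\|\widehat{T}\|=\|T\|_A$, with $\|x_n'\|_A=1$. Thus $\lambda\in\maxat$, which establishes the opposite inclusion and hence the claimed equality. I expect the only delicate point to be verifying that the approximation $w_n\to u_n$ can be carried out while simultaneously keeping $w_n$ in $\mathcal{R}(A)$ and normalised to $\|w_n\|_{\mathbf{R}(A^{1/2})}=1$; once this is arranged, the passage of the two limits is pure continuity of $\widehat{T}$ and of the inner product.
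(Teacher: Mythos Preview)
Your proof is correct and follows essentially the same strategy as the paper: both directions are handled by transporting the defining sequences through $Z_A$, using the identities \eqref{usefuleq001}--\eqref{usefuleq01} and Lemma \ref{imporeq2009}(i), with the reverse inclusion relying on the density of $\mathcal{R}(A)$ in $\mathbf{R}(A^{1/2})$. Your execution of the density step is in fact a bit cleaner than the paper's, which uses a double-indexed family $(x_{n,k})$ and an informal ``double limit'' rather than your direct diagonal choice $\|v_n-u_n\|_{\mathbf{R}(A^{1/2})}<1/n$ followed by normalization.
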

\begin{proof}
We have $Z_AT =\widehat{T}Z_A$, that is, $ATx=\widehat{T}Ax$ for all $x\in \mathcal{H}$. Now, let $\lambda\in W_{\max}^A(T)$, then there exists $(x_n)\subseteq \mathcal{H}$ such that $\|x_n\|_A=1$,
$$\lim_{n\to+\infty}\langle T x_n, x_n\rangle_A=\lambda,\text{ and }\;\displaystyle\lim_{n\to+\infty}\|Tx_n\|_A=\|T\|_A.$$
Set $y_n=Ax_n\in \mathcal{R}(A^{1/2})$. By using \eqref{usefuleq001} together with \eqref{usefuleq01}, we have $\|y_n\|_{\mathbf{R}(A^{1/2})}=\|x_n\|_A=1$ and
$$\langle T x_n, x_n\rangle_A=(AT x_n,Ax_n)=(\widehat{T} y_n,y_n),$$
Again, by \eqref{usefuleq01}, we infer that
$$\|Tx_n\|_A=\|ATx_n\|_{\mathbf{R}(A^{1/2})}=\|\widehat{T} y_n\|_{\mathbf{R}(A^{1/2})}.$$
On the other hand, by Lemma \ref{imporeq2009} we have $\|T\|_A=\|\widehat{T}\|_{\mathbb{B}(\mathbf{R}(A^{1/2}))}$. This implies that $\lambda\in W_{\max}(\widehat{T})$ and so $W_{\max}^A(T)\subseteq W_{\max}(\widehat{T})$. Conversely, let $\lambda\in W_{\max}(\widehat{T})$, then there exists $(y_n)\subseteq \mathcal{R}(A^{1/2})$ such that $\|y_n\|_{\mathbf{R}(A^{1/2})}=1$,
$$\lim_{n\to+\infty}(\widehat{T}y_n,y_n)=\lambda,\text{ and }\;\displaystyle\lim_{n\to+\infty}\|\widehat{T}y_n\|_{\mathbf{R}(A^{1/2})}=\|\widehat{T}\|_{\mathbb{B}(\mathbf{R}(A^{1/2}))}=\|T\|_A.$$
Since $(y_n)\subseteq \mathcal{R}(A^{1/2})$ for all $n$, then there exists $(x_n)\subseteq \mathcal{H}$ such that $y_n=A^{1/2}x_n$. So, $\|A^{1/2}x_n\|_{\mathbf{R}(A^{1/2})}=1$,
\begin{equation}\label{doublelim}
\lim_{n\to+\infty}(\widehat{T}A^{1/2}x_n,A^{1/2}x_n)=\lambda \text{ and }\;\displaystyle\lim_{n\to+\infty}\|\widehat{T}A^{1/2}x_n\|_{\mathbf{R}(A^{1/2})}=\|T\|_A.
\end{equation}
On the other hand, since $\mathcal{R}(A)$ is dense in $\mathbf{R}(A^{1/2})$, then for any $n\in \mathbb{N}$, there exists $(x_{n,k})\subseteq \mathcal{H}$ such that
\begin{equation*}
\lim_{k\to +\infty}\|Ax_{n,k}-A^{1/2}x_n\|_{\mathbf{R}(A^{1/2})}=0.
\end{equation*}
This gives
\begin{equation}\label{eqlilm}
\lim_{k\to +\infty}\|Ax_{n,k}\|_{\mathbf{R}(A^{1/2})}=1.
\end{equation}
Moreover, by \eqref{doublelim} we have
$$\lim_{n,k\to+\infty}(\widehat{T}Ax_{n,k},Ax_{n,k})=\lambda \text{ and }\;\displaystyle\lim_{n,k\to+\infty}\|\widehat{T}Ax_{n,k}\|_{\mathbf{R}(A^{1/2})}=\|T\|_A.$$
Let $z_k=\dfrac{x_{n,k}}{\|Ax_{n,k}\|_{\mathbf{R}(A^{1/2})}}$. So, by using \eqref{eqlilm}, we obtain
$$\lim_{k\to+\infty}(\widehat{T}Az_k,Az_k)=\lambda \text{ and }\;\displaystyle\lim_{k\to+\infty}\|\widehat{T}Az_k\|_{\mathbf{R}(A^{1/2})}=\|T\|_A.$$
On the other hand, we have
$$(\widehat{T}Az_k,Az_k)=(ATz_k,Az_k)\;\text{ and }\;\|\widehat{T}Az_k\|_{\mathbf{R}(A^{1/2})}=\|ATz_k\|_{\mathbf{R}(A^{1/2})}.$$
So, by applying \eqref{usefuleq001} together with \eqref{usefuleq01}, we infer that
$$\lim_{k\to+\infty}\langle T z_k, z_k\rangle_A=\lambda \text{ and }\;\displaystyle\lim_{k\to+\infty}\|Tz_k\|_A=\|T\|_A.$$
Furthermore, $\|Az_k\|_{\mathbf{R}(A^{1/2})}=\|z_k\|_A=1$. So, we deduce that $\lambda\in W_{\max}^A(T)$. Hence, the proof is complete.
\end{proof}
\noindent
Now, we are in a position to prove the following three theorems. The first one has been proved in \cite{bakfeki01}, however we can obtain the same result  as an immediate consequence of Lemma \ref{lemmamaximal} and \cite[Lemma 2]{s1}.
\begin{theorem}
Let $T\in\mathbb{B}_{A^{1/2}}(\mathcal{H})$. Then $W_{\max}^A(T)$ is  convex.
\end{theorem}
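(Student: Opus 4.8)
The plan is to exploit the transference already set up in the excerpt rather than to argue convexity from scratch. The key observation is that Lemma \ref{lemmamaximal} identifies the $A$-maximal numerical range with an ordinary maximal numerical range: for $T\in\mathbb{B}_{A^{1/2}}(\mathcal{H})$ we have $W_{\max}^A(T)=W_{\max}(\widehat{T})$, where $\widehat{T}\in\mathbb{B}(\mathbf{R}(A^{1/2}))$ is the operator provided by Proposition \ref{prop_arias}. Since $\mathbf{R}(A^{1/2})$ is a genuine Hilbert space, the object $W_{\max}(\widehat{T})$ is the classical (i.e.\ $A=I$) maximal numerical range of a bounded operator on that Hilbert space.

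First I would recall that Stampfli established in \cite[Lemma 2]{s1} that for any bounded operator $S$ acting on a Hilbert space, the classical maximal numerical range $W_{\max}(S)$ is a convex subset of $\mathbb{C}$. Applying this to $S=\widehat{T}$ on the Hilbert space $\mathbf{R}(A^{1/2})$ immediately yields that $W_{\max}(\widehat{T})$ is convex.

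Then I would combine the two facts: by Lemma \ref{lemmamaximal} the set $W_{\max}^A(T)$ coincides with $W_{\max}(\widehat{T})$, and the latter is convex by Stampfli's lemma; hence $W_{\max}^A(T)$ is convex, which is exactly the assertion. No further computation is required, so the argument is genuinely ``immediate'' once the reduction is invoked.

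There is no serious obstacle here; the only thing to make sure of is that the hypotheses line up, namely that $\widehat{T}$ really is a bounded operator on the \emph{Hilbert} space $\mathbf{R}(A^{1/2})$ so that the classical result applies verbatim. This is guaranteed by Proposition \ref{prop_arias} (which gives $\widehat{T}\in\mathbb{B}(\mathbf{R}(A^{1/2}))$) together with the fact, recorded in the excerpt, that $\mathbf{R}(A^{1/2})$ is complete. With that verified, the convexity of $W_{\max}^A(T)$ follows directly, and the whole proof reduces to citing Lemma \ref{lemmamaximal} and \cite[Lemma 2]{s1}.
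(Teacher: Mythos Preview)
Your proposal is correct and follows exactly the approach the paper indicates: the paper does not spell out a proof but states that the result is an immediate consequence of Lemma~\ref{lemmamaximal} together with \cite[Lemma~2]{s1}, which is precisely the transference-to-$\widehat{T}$ argument you wrote out.
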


\begin{theorem}\label{maintheorem01}
Let $T\in \mathbb{B}_{A^{1/2}}(\mathcal{H})$. Then the following conditions are equivalent:
\begin{itemize}
 \item [(1)] $0\in W_{\max}^A(T)$.
  \item [(2)] $\|T\|_A^2+|\lambda|^2\leq \|T+\lambda\|_A^2$  for any $\lambda\in \mathbb{C}$.
  \item [(3)] $\|T\|_A\leq \|T+\lambda\|_A$ for any $\lambda\in \mathbb{C}$.
\end{itemize}
\end{theorem}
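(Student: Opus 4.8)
The plan is to transport the whole statement to the genuine Hilbert space $\mathbf{R}(A^{1/2})$ by means of the operator $\widehat{T}$ furnished by Proposition \ref{prop_arias}, and then to invoke Stampfli's classical result, Theorem \ref{equi1970}, applied to $\widehat{T}$. Conditions (1)--(3) are exactly the $A$-versions of the three conditions in Theorem \ref{equi1970}, so once each is shown to be equivalent to its hatted counterpart, the equivalence follows at once from the scalar case.

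The first thing I would record is that the assignment $T\mapsto\widehat{T}$ is linear and sends $\lambda I$ to $\lambda I_{\mathbf{R}(A^{1/2})}$. Indeed, from $Z_A(T+S)=Z_AT+Z_AS=(\widehat{T}+\widehat{S})Z_A$ together with the uniqueness clause of Proposition \ref{prop_arias} one gets $\widehat{T+S}=\widehat{T}+\widehat{S}$, while $Z_A(\lambda I)=\lambda Z_A$ forces $\widehat{\lambda I}=\lambda I_{\mathbf{R}(A^{1/2})}$. (Here one also notes $I\in\bad$, since $\nora{I}\le 1<\infty$, so that $T+\lambda I\in\bad$ and $\widehat{T+\lambda}$ is genuinely defined.) Consequently $\widehat{T+\lambda}=\widehat{T}+\lambda$ for every $\lambda\in\mathbb{C}$, and combining this with Lemma \ref{imporeq2009}(i) yields the crucial identity
$$\nora{T+\lambda}=\|\widehat{T}+\lambda\|_{\mathbb{B}(\mathbf{R}(A^{1/2}))},\qquad\lambda\in\mathbb{C},$$
which for $\lambda=0$ recovers $\nora{T}=\|\widehat{T}\|_{\mathbb{B}(\mathbf{R}(A^{1/2}))}$.

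With these identities in hand, each condition matches its hatted analogue. By Lemma \ref{lemmamaximal}, $0\in W_{\max}^A(T)$ if and only if $0\in W_{\max}(\widehat{T})$, which is condition (1) of Theorem \ref{equi1970} for $\widehat{T}$. Substituting the displayed norm identity, condition (2) is equivalent (for each fixed $\lambda$, hence for all $\lambda$) to $\|\widehat{T}\|^2+|\lambda|^2\leq\|\widehat{T}+\lambda\|^2$, and condition (3) to $\|\widehat{T}\|\leq\|\widehat{T}+\lambda\|$; these are precisely conditions (2) and (3) of Theorem \ref{equi1970} for $\widehat{T}$. Since $\mathbf{R}(A^{1/2})$ is a bona fide Hilbert space, Theorem \ref{equi1970} applies verbatim to $\widehat{T}\in\mathbb{B}(\mathbf{R}(A^{1/2}))$ and delivers the equivalence of the three hatted statements, whence (1)$\Leftrightarrow$(2)$\Leftrightarrow$(3).

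I expect the only genuinely delicate point to be the bookkeeping that makes $T\mapsto\widehat{T}$ respect the addition of scalar multiples of the identity; everything else is a direct citation of Lemmas \ref{imporeq2009} and \ref{lemmamaximal} together with Theorem \ref{equi1970}. This is the same transfer strategy already used to establish Lemma \ref{lemmamaximal}, so no new analytic difficulty should arise.
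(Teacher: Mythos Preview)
Your proposal is correct and follows essentially the same transfer strategy as the paper: invoke Lemmas~\ref{imporeq2009} and~\ref{lemmamaximal} to match each condition with its hatted counterpart on $\mathbf{R}(A^{1/2})$, then apply Theorem~\ref{equi1970} to $\widehat{T}$. The only cosmetic difference is that you derive $\widehat{T+\lambda}=\widehat{T}+\lambda I_{\mathbf{R}(A^{1/2})}$ directly from the uniqueness clause in Proposition~\ref{prop_arias}, whereas the paper cites \cite{f4} for this identity.
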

\begin{proof}
Note first that by using Theorem \ref{equi1970}, we obtain the equivalence between the following assertions:
\begin{itemize}
  \item [(i)] $0\in W_{\max}(\widehat{T})$.
  \item [(ii)] $\|\widehat{T}\|_{\mathbb{B}(\mathbf{R}(A^{1/2}))}^2+|\lambda|^2\leq \|\widehat{T}+\lambda I_{\mathbf{R}(A^{1/2}}\|_{\mathbb{B}(\mathbf{R}(A^{1/2}))}^2$  for any $\lambda\in \mathbb{C}$.
  \item [(iii)] $\|\widehat{T}\|_{\mathbb{B}(\mathbf{R}(A^{1/2}))}\leq \|\widehat{T}+\lambda I_{\mathbf{R}(A^{1/2}}\|_{\mathbb{B}(\mathbf{R}(A^{1/2}))}$ for any $\lambda\in \mathbb{C}$.
\end{itemize}
On the other hand, by Lemma \ref{lemmamaximal}, we have $W_{\max}^A(T)=W_{\max}(\widehat{T})$. Moreover, by Lemma \ref{imporeq2009}, we have $\|T\|_A=\|\widehat{T}\|_{\mathbb{B}(\mathbf{R}(A^{1/2}))}$. Also, notice that $T+\lambda \in \mathbb{B}_{A^{1/2}}(\mathcal{H})$ for any $\lambda \in \cit$ since $ \mathbb{B}_{A^{1/2}}(\mathcal{H})$ is a subalgebra of $\bh$. Then, from Proposition \ref{prop_arias}, for any $\lambda \in \cit$ there exists a unique
 $\widehat{T+\lambda}\in \mathbb{B}(\mathbf{R}(A^{1/2}))$ such that $Z_A(T+\lambda) =\widehat{T+\lambda}Z_A$. So, all what remains to prove is that $\|T+\lambda\|_A=\|\widehat{T}+\lambda I_{\mathbf{R}(A^{1/2}}\|_{\mathbb{B}(\mathbf{R}(A^{1/2}))}$ for any $\lambda \in \cit$. But the above equality follows by applying Lemma \ref{imporeq2009} (i) together with the fact that $\widehat{T+\lambda}=\widehat{T}+\lambda I_{\mathbf{R}(A^{1/2}}$ (see \cite{f4}).
\end{proof}
\noindent
Now, we state the third theorem which generalizes Theorem \ref{spitkovsky} for $A$-bounded operators.  We need the following lemma.
\begin{lemma}\label{lemmanew23112019}
Let $T\in\mathbb{B}_{A^{1/2}}(\mathcal{H})$. Then,
$$\Gamma_A(T)\cap W_{\max}^A(T)=\Gamma_A(T)\cap \overline{W_A(T)}.$$
\end{lemma}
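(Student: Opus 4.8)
The plan is to prove the two inclusions separately, with essentially all the content lying in one direction. First I would record the elementary inclusion $W_{\max}^A(T)\subseteq \overline{W_A(T)}$: every $\lambda\in W_{\max}^A(T)$ is by definition a limit of a sequence $\langle Tx_n,x_n\rangle_A$ with $\|x_n\|_A=1$, and each such term lies in $W_A(T)$, so $\lambda\in\overline{W_A(T)}$. Intersecting with $\Gamma_A(T)$ immediately yields the inclusion $\Gamma_A(T)\cap W_{\max}^A(T)\subseteq \Gamma_A(T)\cap \overline{W_A(T)}$.

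For the reverse inclusion, let $z\in \Gamma_A(T)\cap \overline{W_A(T)}$, so that $|z|=\|T\|_A$ and there is a sequence $(x_n)\subseteq\mathcal{H}$ with $\|x_n\|_A=1$ and $\langle Tx_n,x_n\rangle_A\to z$. The key observation is that this \emph{same} sequence already realizes $z$ as an element of $W_{\max}^A(T)$, with no modification needed. Indeed, since $\langle\cdot,\cdot\rangle_A$ is a positive semidefinite form, the Cauchy--Schwarz inequality gives $|\langle Tx_n,x_n\rangle_A|\le \|Tx_n\|_A\|x_n\|_A=\|Tx_n\|_A$, while the $A$-boundedness of $T$ gives $\|Tx_n\|_A\le \|T\|_A\|x_n\|_A=\|T\|_A$. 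Combining these with a squeeze,
$$\|T\|_A=|z|=\lim_n|\langle Tx_n,x_n\rangle_A|\le \liminf_n\|Tx_n\|_A\le \limsup_n\|Tx_n\|_A\le \|T\|_A,$$
forces $\lim_n\|Tx_n\|_A=\|T\|_A$. Hence $(x_n)$ satisfies all three defining conditions of $W_{\max}^A(T)$, so $z\in W_{\max}^A(T)$ and therefore $z\in\Gamma_A(T)\cap W_{\max}^A(T)$, completing the reverse inclusion.

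The main step is conceptual rather than computational: one must recognize that on the circle $\Gamma_A(T)$ the numerical-range constraint $|\langle Tx_n,x_n\rangle_A|\to\|T\|_A$ automatically saturates the norm bound $\|Tx_n\|_A\le\|T\|_A$ through Cauchy--Schwarz, and this saturation is exactly the extra condition that separates $W_{\max}^A(T)$ from $\overline{W_A(T)}$. The only technical points deserving a word of care are that the Cauchy--Schwarz inequality remains valid for the merely semidefinite form $\langle\cdot,\cdot\rangle_A$, and that the inequality $\|Tx\|_A\le\|T\|_A\|x\|_A$ holds for \emph{every} $x\in\mathcal{H}$; both are guaranteed by the standing hypothesis $T\in\mathbb{B}_{A^{1/2}}(\mathcal{H})$. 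As an alternative route, the entire statement could be transported to the Hilbert space $\mathbf{R}(A^{1/2})$ via Lemma \ref{lemmamaximal} and Lemma \ref{imporeq2009}, reducing it to the corresponding classical fact for the operator $\widehat{T}$; however, the direct semi-Hilbertian argument above is shorter and self-contained, so I would present that one.
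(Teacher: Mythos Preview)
Your proof is correct and follows essentially the same route as the paper: the easy inclusion comes from $W_{\max}^A(T)\subseteq\overline{W_A(T)}$, and the nontrivial inclusion is obtained by taking an approximating sequence for $z\in\Gamma_A(T)\cap\overline{W_A(T)}$ and using Cauchy--Schwarz together with the bound $\|Tx_n\|_A\le\|T\|_A$ to squeeze $\|Tx_n\|_A\to\|T\|_A$. Your presentation is in fact slightly more careful than the paper's (you write $|z|=\|T\|_A$ rather than $z=\|T\|_A$, and you spell out the liminf/limsup squeeze explicitly).
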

\begin{proof}
Since $W_{\max}^A(T)\subseteq\overline{W_A(T)}$ then the first inclusion holds. Now, let $\lambda\in\Gamma_A(T)\cap \overline{W_A(T)}$. Then, $\lambda=\|T\|_A$ and there exists a sequence $(\lambda_n)\subseteq W_A(T)$ such that $\displaystyle \lambda=\lim_{n}\lambda_n$. So, there is a sequence $(x_n)\subseteq \mathcal{H}$ such that $\|x_n\|_A=1$ and $\lambda_n=\langle Tx_n, x_n\rangle_A$ for all $n$. 	By applying the Cauchy-Schwarz inequality we get
\begin{align*}
|\langle Tx_n, x_n\rangle_A|
& =|\langle A^{1/2}Tx_n, A^{1/2}x_n\rangle| \\
 &\leq \|Tx_n\|_A\|x_n\|_A \\
 &=\|Tx_n\|_A\\
 &\leq \|T\|_A.
\end{align*}
So, $\displaystyle\lim_{n}\|Tx_n\|_A=\|T\|_A$. Hence, $\lambda\in\Gamma_A(T)\cap W_{\max}^A(T)$.
\end{proof}
\noindent

Now, we are able to prove one of our main results of this article. We will denote by $\stackrel{\circ}{L}$ the interior of any subset $L$ in the complex plane.
\begin{theorem}
Let $T\in \mathbb{B}_{A^{1/2}}(\mathcal{H})$. Then, the following statements are equivalent
\begin{itemize}
  \item [(1)] $T$ is an $A$-normaloid operator,
  \item [(2)] $W_{\max}^A(T)\cap \partial W_A(T)\neq\varnothing$.
\end{itemize}
\end{theorem}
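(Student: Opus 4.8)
The plan is to reduce the statement to Spitkovsky's classical characterization (Theorem~\ref{spitkovsky}) applied to the operator $\widehat{T}\in\mathbb{B}(\mathbf{R}(A^{1/2}))$ supplied by Proposition~\ref{prop_arias}, by assembling the chain of equivalences
\[
T\ A\text{-normaloid}\iff \widehat{T}\ \text{normaloid}\iff W_{\max}(\widehat{T})\cap\partial W(\widehat{T})\neq\varnothing\iff W_{\max}^A(T)\cap\partial W_A(T)\neq\varnothing .
\]
The middle equivalence is precisely Theorem~\ref{spitkovsky} for $\widehat{T}$, so everything rests on translating the two outer conditions between $T$ and $\widehat{T}$.

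The left equivalence is immediate. By Proposition~\ref{charactenormailoid}, $T$ is $A$-normaloid if and only if $\omega_A(T)=\|T\|_A$, while Lemma~\ref{imporeq2009} gives $\omega_A(T)=\omega(\widehat{T})$ and $\|T\|_A=\|\widehat{T}\|_{\mathbb{B}(\mathbf{R}(A^{1/2}))}$; hence $\omega_A(T)=\|T\|_A$ if and only if $\omega(\widehat{T})=\|\widehat{T}\|_{\mathbb{B}(\mathbf{R}(A^{1/2}))}$, i.e.\ if and only if $\widehat{T}$ is normaloid.

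For the right equivalence, Lemma~\ref{lemmamaximal} already provides $W_{\max}^A(T)=W_{\max}(\widehat{T})$, so the only remaining point---and the main obstacle---is the boundary identity $\partial W_A(T)=\partial W(\widehat{T})$. I would first record, using $\langle Tx,x\rangle_A=(\widehat{T}Ax,Ax)$ together with $\|Ax\|_{\mathbf{R}(A^{1/2})}=\|x\|_A$ from \eqref{usefuleq001}--\eqref{usefuleq01}, that $W_A(T)=\{(\widehat{T}y,y):y\in\mathcal{R}(A),\ \|y\|_{\mathbf{R}(A^{1/2})}=1\}\subseteq W(\widehat{T})$. Since $\mathcal{R}(A)$ is dense in $\mathbf{R}(A^{1/2})$, normalizing approximants shows the unit sphere of $\mathcal{R}(A)$ is dense in that of $\mathbf{R}(A^{1/2})$, and continuity of $y\mapsto(\widehat{T}y,y)$ then yields $W(\widehat{T})\subseteq\overline{W_A(T)}$; combining both inclusions gives $\overline{W_A(T)}=\overline{W(\widehat{T})}$. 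Both $W_A(T)$ and $W(\widehat{T})$ are convex (the latter by Toeplitz--Hausdorff, the former because the interpolating vectors in the Toeplitz--Hausdorff argument stay in the subspace $\mathcal{R}(A)$), and two convex sets sharing a common closure have the same interior and hence the same topological boundary (the flat case, where both interiors are empty, being trivial). This yields $\partial W_A(T)=\partial W(\widehat{T})$ and closes the chain.

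Finally, I would note that the implication $(1)\Rightarrow(2)$ also admits a short self-contained proof: if $T$ is $A$-normaloid, then Theorem~\ref{newcarac} and Lemma~\ref{lemmanew23112019} produce a point $\lambda\in\Gamma_A(T)\cap W_{\max}^A(T)$, and since $|\lambda|=\|T\|_A=\omega_A(T)$ is the maximal modulus attained on $\overline{W_A(T)}$, such $\lambda$ cannot lie in the interior of $W_A(T)$, so $\lambda\in W_{\max}^A(T)\cap\partial W_A(T)$. The genuinely difficult content is the converse, for which the reduction to Theorem~\ref{spitkovsky} through the boundary identity appears to be the natural route.
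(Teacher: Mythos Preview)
Your proposal is correct and follows essentially the same route as the paper: the direct argument for $(1)\Rightarrow(2)$ via Theorem~\ref{newcarac} and Lemma~\ref{lemmanew23112019} is exactly the paper's, and for $(2)\Rightarrow(1)$ the paper likewise reduces to Spitkovsky's theorem for $\widehat{T}$ by invoking $W_{\max}^A(T)=W_{\max}(\widehat{T})$, the equivalence $T$ $A$-normaloid $\iff \widehat{T}$ normaloid, and the boundary identity $\partial W_A(T)=\partial W(\widehat{T})$ obtained from $\overline{W_A(T)}=\overline{W(\widehat{T})}$ and convexity. The only cosmetic difference is that the paper cites $\overline{W_A(T)}=\overline{W(\widehat{T})}$ from \cite{feki01} rather than reproving it via the density argument you sketch.
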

\begin{proof}
$(1)\Rightarrow(2)$: Assume that $T$ is an $A$-normaloid operator. Then, by applying Theorem \ref{newcarac} together with Lemma \ref{lemmanew23112019}, we get $$\Gamma_A(T)\cap \overline{W_A(T)}=\Gamma_A(T)\cap W_{\max}^A(T)\neq \varnothing.$$
So, there exist $z\in \Gamma_A(T)\cap \overline{W_A(T)}$. Thus, $z$ must lie on the boundary of $W_A(T)$. Since $z$ is also in $W_{\max}^A(T)$, then $W_{\max}^A(T)\cap \partial W_A(T)\neq\varnothing$ as required.\\
$(2)\Rightarrow(1)$: Assume that  $W_{\max}^A(T)\cap \partial W_A(T)\neq\varnothing$.  Notice that in view of Lemma \ref{imporeq2009} we have $T$ is $A$-normaloid if and only if $\widehat{T}$ is a normaloid operator on the Hilbert space $\mathbf{R}(A^{1/2})$. So, in order to prove $(1)$, it suffices to show that
$$W_{\max}(\widehat{T})\cap \partial W(\widehat{T})\neq\varnothing.$$
It was shown in \cite{feki01} that $\overline{W(\widehat{T})}= \overline{W_A(T)}$. Hence $\partial \overline{W(\widehat{T})}=\partial \overline{W_A(T)}$. It is well known that if $C$ is a convex subset in the complex plane, then $\stackrel{\circ}{C}= \stackrel{\circ}{\overline{C}}$. Thus $\partial C= \overline{C}\setminus\stackrel{\circ}{C}= \overline{\overline{C}}\setminus\stackrel{\circ}{\overline{C}}=\partial  \overline{C}$.  Therefore, since both of $W(\widehat{T})$ and $W_A(T)$ are convex,   the equality  $\partial \overline{W(\widehat{T})}=\partial \overline{W_A(T)}$ implies   $\partial W(\widehat{T})=\partial W_A(T)$. Moreover, $ W_{\max}^A(T)=W_{\max}(\widehat{T})$ by Lemma \ref{lemmamaximal}.
We deduce that $W_{\max}(\widehat{T})\cap \partial W(\widehat{T})\neq\varnothing$. This completes the proof.
\end{proof}
\begin{remark}\label{remark2}
\normalfont{ In \cite{chanchan}, the authors gave the following characterization in terms of the numerical radius of a normaloid operator. An operator $T \in\bh$ is  normaloid  if and only if $\omega(T)=\omega_{max}(T)$. Here,  $\omega_{\max}(T)$ is the maximal numerical radius defined by
$$\omega_{\max}(T):=\sup\{|\lambda|\,;\;\lambda\in W_{\max}(T) \}.$$
Therefore, by Lemmas \ref{imporeq2009} and \ref{lemmamaximal}, we can easily obtain the following analogous characterization of $A$-normaloid operators as follows. Notice that this characterization has been also proved by the third author in \cite{feki01}. However, our approach here is different from that used in \cite{feki01}. }
\end{remark}
\begin{theorem}
Let $T\in \mathbb{B}_{A^{1/2}}(\mathcal{H})$. Then, the following statements are equivalent
\begin{itemize}
  \item [(1)] $T$ is an $A$-normaloid operator,
  \item [(2)] $ \omega_{A}(T) = \omega^{A}_{\max}(T), $
\end{itemize}
where $\omega^{A}_{\max}(T)$ is the $A$-maximal numerical radius defined by
 $$\omega^{A}_{\max}(T):=\sup\set{\modu{\lambda};~\lambda \in \maxat}.$$
\end{theorem}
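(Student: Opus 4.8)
The plan is to reduce the claim, via the hat construction $T\mapsto\widehat{T}$, to the classical characterization of normaloid operators recorded in Remark \ref{remark2}: a bounded operator $S$ on a Hilbert space satisfies $\omega(S)=\omega_{\max}(S)$ if and only if $S$ is normaloid. All the transfer machinery is already in place, so the argument is essentially a bookkeeping of the correspondences between $T$ and $\widehat{T}$, with no genuinely new analytic input required.

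First I would rewrite condition (2) entirely in terms of $\widehat{T}$. By Lemma \ref{lemmamaximal} we have the set equality $\maxat=W_{\max}(\widehat{T})$, and taking the supremum of $\modu{\lambda}$ over these two identical subsets of $\mathbb{C}$ yields at once
$$\omega^{A}_{\max}(T)=\sup\set{\modu{\lambda};~\lambda\in\maxat}=\sup\set{\modu{\lambda};~\lambda\in W_{\max}(\widehat{T})}=\omega_{\max}(\widehat{T}).$$
On the other hand, Lemma \ref{imporeq2009}\,(ii) gives $\omega_A(T)=\omega(\widehat{T})$. Combining these two identities, condition (2), namely $\omega_A(T)=\omega^{A}_{\max}(T)$, is seen to hold if and only if $\omega(\widehat{T})=\omega_{\max}(\widehat{T})$.

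Next I would invoke the characterization from \cite{chanchan} quoted in Remark \ref{remark2}, applied to the operator $\widehat{T}$ acting on the Hilbert space $\mathbf{R}(A^{1/2})$: the equality $\omega(\widehat{T})=\omega_{\max}(\widehat{T})$ is equivalent to $\widehat{T}$ being normaloid. Finally, as already observed in the excerpt (as a consequence of Lemma \ref{imporeq2009}, since $\nora{T}=\|\widehat{T}\|_{\mathbb{B}(\mathbf{R}(A^{1/2}))}$ and $r_A(T)=r(\widehat{T})$), the operator $T$ is $A$-normaloid if and only if $\widehat{T}$ is normaloid. Chaining these equivalences closes the loop between (1) and (2).

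I do not anticipate a real obstacle here: once the hat correspondence is fixed, each quantity ($A$-numerical radius, $A$-maximal numerical radius, $A$-operator norm, $A$-spectral radius, and hence $A$-normaloidness) transfers verbatim to its unadorned counterpart for $\widehat{T}$. The only point deserving a line of care is the step $\maxat=W_{\max}(\widehat{T})\Rightarrow\omega^{A}_{\max}(T)=\omega_{\max}(\widehat{T})$, which is immediate precisely because the supremum of the modulus over two equal subsets of $\mathbb{C}$ returns the same number; everything else is a direct citation of the lemmas already established.
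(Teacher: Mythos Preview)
Your proposal is correct and follows exactly the approach the paper indicates: the paper does not write out a formal proof but explains in Remark~\ref{remark2} that the result follows from the Chan--Chan characterization applied to $\widehat{T}$, using Lemmas~\ref{imporeq2009} and~\ref{lemmamaximal} to transfer between $T$ and $\widehat{T}$. Your write-up simply makes these steps explicit, and the one minor point---that $T$ is $A$-normaloid iff $\widehat{T}$ is normaloid---is already recorded in the paper (in the proof of the preceding theorem), so nothing is missing.
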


On the other hand, by a similar argument as in the proof of Theorem \ref{maintheorem01} and using Corollary \ref{stamcoro}, we obtain the following corollary.
\begin{corollary}\label{kbb}
Let $T \in \bad$. Then,  there is a unique scalar $c_{A}(T)$ such that
\begin{equation}\label{pythastam}
\Vert T - c_{A}(T)\Vert_{A}^2 +  \vert \lambda \vert^2 \leq   \Vert (T- c_{A}(T)) - \lambda  \Vert_{A}^2,~ \text{for all}~    \lambda  \in  \mathbb{C}.
\end{equation}
Moreover, $ 0\in W_{\max}^{A}(T)$ if and only if $c_{A}(T)=0.$
\end{corollary}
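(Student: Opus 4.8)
The plan is to transport the entire statement to the genuine Hilbert space $\mathbf{R}(A^{1/2})$ through the operator $\widehat{T}$ and then invoke the classical Stampfli result recorded in Corollary \ref{stamcoro}, exactly mirroring the proof of Theorem \ref{maintheorem01}. Since $T\in\bad$, Proposition \ref{prop_arias} furnishes a unique $\widehat{T}\in\mathbb{B}(\mathbf{R}(A^{1/2}))$ with $Z_AT=\widehat{T}Z_A$. Applying Corollary \ref{stamcoro} to $\widehat{T}$ on $\mathbf{R}(A^{1/2})$ produces a unique scalar, which I would christen $c_A(T):=c_{\widehat{T}}$, such that
$$\|\widehat{T}-c_A(T)\|_{\mathbb{B}(\mathbf{R}(A^{1/2}))}^2+|\lambda|^2\leq\|(\widehat{T}-c_A(T))-\lambda\|_{\mathbb{B}(\mathbf{R}(A^{1/2}))}^2$$
for every $\lambda\in\mathbb{C}$, together with the equivalence $0\in W_{\max}(\widehat{T})\Leftrightarrow c_A(T)=0$.

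Next I would translate each of these norms back to the seminorm $\|\cdot\|_A$. For any scalar $c$ one has $T-c\in\bad$, since $\bad$ is a subalgebra of $\bh$ containing the identity (as already noted in the proof of Theorem \ref{maintheorem01}). Hence Proposition \ref{prop_arias} applies, and $\widehat{T-c}=\widehat{T}-cI_{\mathbf{R}(A^{1/2})}$ by \cite{f4}; combined with Lemma \ref{imporeq2009}(i) this gives $\|T-c\|_A=\|\widehat{T}-cI_{\mathbf{R}(A^{1/2})}\|_{\mathbb{B}(\mathbf{R}(A^{1/2}))}$. Applying the same identity to the shifted operator $(T-c_A(T))-\lambda$ yields $\|(T-c_A(T))-\lambda\|_A=\|(\widehat{T}-c_A(T))-\lambda\|_{\mathbb{B}(\mathbf{R}(A^{1/2}))}$. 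Substituting these two equalities into the displayed inequality above delivers precisely \eqref{pythastam}.

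For the \emph{moreover} part, Lemma \ref{lemmamaximal} gives $W_{\max}^A(T)=W_{\max}(\widehat{T})$, so $0\in W_{\max}^A(T)$ if and only if $0\in W_{\max}(\widehat{T})$, which by Corollary \ref{stamcoro} is equivalent to $c_{\widehat{T}}=0$, i.e. $c_A(T)=0$. Uniqueness of $c_A(T)$ is inherited from Corollary \ref{stamcoro}: if some scalar $c$ satisfies \eqref{pythastam}, then rewriting via the norm equalities above shows that $c$ satisfies the defining inequality for $\widehat{T}$, which forces $c=c_{\widehat{T}}$; here I would use that $A\neq0$ guarantees $\mathbf{R}(A^{1/2})\neq\{0\}$, so $I_{\mathbf{R}(A^{1/2})}\neq0$ and distinct scalars produce distinct translates $\widehat{T}-cI_{\mathbf{R}(A^{1/2})}$.

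The whole argument is essentially a dictionary translation, so no single step is genuinely hard; the only point demanding care is the compatibility of the hat-map with scalar shifts, namely the identity $\widehat{T-c}=\widehat{T}-cI_{\mathbf{R}(A^{1/2})}$ and the resulting norm equalities. This is exactly the technical ingredient already isolated in the proof of Theorem \ref{maintheorem01}, so I expect it to be routine rather than a true obstacle.
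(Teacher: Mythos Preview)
Your proposal is correct and follows exactly the route the paper intends: the paper itself states that the corollary is obtained ``by a similar argument as in the proof of Theorem \ref{maintheorem01} and using Corollary \ref{stamcoro}'', and right afterward notes that $c_A(T)=c_{\widehat{T}}$, precisely as you define it. Your write-up is in fact more detailed than what the paper provides, but the ingredients (Proposition \ref{prop_arias}, Lemma \ref{imporeq2009}(i), the identity $\widehat{T-\lambda}=\widehat{T}-\lambda I_{\mathbf{R}(A^{1/2})}$, and Lemma \ref{lemmamaximal}) are identical.
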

\noindent
Note that $c_{A}(T)=c_{\widehat{T}}$; center of mass of $\widehat{T}$. We call $c_{A}(T)$ the $A$-center of mass of $T$ and we denote $d_{A}(T)=\nor{T-c_{A}(T)}_{A}$ that we call the $A$-distance of $T$ to scalars. Clearly, $c_{A}(T)$ is the unique scalar satisfying
$$ d_{A}(T)=\inf_{\lambda \in \cit}\nor{T-\lambda}_{A}.$$
In the following, we give a formula for $d_{A}(T)$, where $T \in \bad$.
\begin{theorem}\label{formula}
Let $T \in \bad$. Then,
$$ d^{2}_{A}(T)=\sup_{\nor{x}_{A}=1} \left\lbrace  \nor{Tx}^{2}_{A} - \modu{\langle Tx,x \rangle_{A}}^{2}\right\rbrace. $$
\end{theorem}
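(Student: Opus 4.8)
The plan is to transfer the problem to the auxiliary Hilbert space $\mathbf{R}(A^{1/2})$ via the operator $\widehat{T}$ and then invoke the classical Stampfli formula \cite[Lemma 2]{s1}: for any bounded operator $S$ on a complex Hilbert space $\mathcal{K}$,
\begin{equation*}
d^{2}(S)=\sup_{\|y\|=1}\bigl\{\|Sy\|^{2}-|\langle Sy,y\rangle|^{2}\bigr\},\qquad d(S):=\inf_{\lambda\in\cit}\|S-\lambda\|.
\end{equation*}
First I would record that $d_{A}(T)=d(\widehat{T})$. Indeed, since $\widehat{T+\lambda}=\widehat{T}+\lambda I_{\mathbf{R}(A^{1/2})}$ and $\|T+\lambda\|_{A}=\|\widehat{T}+\lambda I_{\mathbf{R}(A^{1/2})}\|_{\mathbb{B}(\mathbf{R}(A^{1/2}))}$ by Lemma \ref{imporeq2009}(i), taking the infimum over $\lambda\in\cit$ gives $d_{A}(T)=\inf_{\lambda}\|T-\lambda\|_{A}=\inf_{\lambda}\|\widehat{T}-\lambda\|_{\mathbb{B}(\mathbf{R}(A^{1/2}))}=d(\widehat{T})$, consistent with the already noted identity $c_{A}(T)=c_{\widehat{T}}$.

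Next, applying the classical formula to $S=\widehat{T}$ on $\mathcal{K}=\mathbf{R}(A^{1/2})$ yields
\begin{equation*}
d^{2}_{A}(T)=d^{2}(\widehat{T})=\sup_{\|y\|_{\mathbf{R}(A^{1/2})}=1}\bigl\{\|\widehat{T}y\|^{2}_{\mathbf{R}(A^{1/2})}-|(\widehat{T}y,y)|^{2}\bigr\}.
\end{equation*}
It then remains to rewrite the right-hand side in $A$-terms. For $y=Ax$ with $\|x\|_{A}=1$, relation \eqref{usefuleq01} gives $\|Ax\|_{\mathbf{R}(A^{1/2})}=1$, while \eqref{usefuleq001} and \eqref{usefuleq01} give $(\widehat{T}Ax,Ax)=\langle Tx,x\rangle_{A}$ and $\|\widehat{T}Ax\|_{\mathbf{R}(A^{1/2})}=\|Tx\|_{A}$; consequently the functional at $y=Ax$ equals $\|Tx\|_{A}^{2}-|\langle Tx,x\rangle_{A}|^{2}$. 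Thus the supremum restricted to the subset $\{Ax:\|x\|_{A}=1\}$ of the unit sphere of $\mathbf{R}(A^{1/2})$ is exactly $\sup_{\|x\|_{A}=1}\{\|Tx\|_{A}^{2}-|\langle Tx,x\rangle_{A}|^{2}\}$, which immediately bounds $d_{A}^{2}(T)$ from below.

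The only non-formal step — the hard part — is the reverse inequality: showing that restricting the supremum to the dense subset $\{Ax:\|x\|_{A}=1\}=\mathcal{R}(A)\cap S$ (with $S$ the unit sphere of $\mathbf{R}(A^{1/2})$) does not decrease it. I would argue by continuity. The functional $f(y):=\|\widehat{T}y\|^{2}_{\mathbf{R}(A^{1/2})}-|(\widehat{T}y,y)|^{2}$ is continuous on $\mathbf{R}(A^{1/2})$ since $\widehat{T}$ is bounded. As $\mathcal{R}(A)$ is dense in $\mathbf{R}(A^{1/2})$, given $y_{0}\in S$ I would pick $Ax_{n}\to y_{0}$, so that $c_{n}:=\|Ax_{n}\|_{\mathbf{R}(A^{1/2})}\to 1$; for large $n$ the normalized vectors $A\bigl(x_{n}/c_{n}\bigr)$ lie in $\mathcal{R}(A)\cap S$ (note $\|x_{n}/c_{n}\|_{A}=\|A(x_{n}/c_{n})\|_{\mathbf{R}(A^{1/2})}=1$ by \eqref{usefuleq01}) and converge to $y_{0}$, whence $f(y_{0})=\lim_{n}f\bigl(A(x_{n}/c_{n})\bigr)\le\sup_{\mathcal{R}(A)\cap S}f$. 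Taking the supremum over $y_{0}\in S$ forces the two suprema to coincide, and combining this with the previous steps closes the argument. The remaining verifications (continuity of $f$, the normalization estimate) are routine.
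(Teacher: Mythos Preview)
Your argument is correct and takes a genuinely different route from the paper's. The paper argues directly in the semi-Hilbertian setting: for the inequality $d_A^2(T)\geq\sup\{\cdots\}$ it expands $\|(T-c_A(T))x\|_A^2$ for an arbitrary $A$-unit vector $x$ and completes the square; for the reverse inequality it invokes Theorem~\ref{maintheorem01} to get $0\in W_{\max}^A(T-c_A(T))$, extracts a sequence $(x_n)$ with $\|x_n\|_A=1$, $\langle Tx_n,x_n\rangle_A\to c_A(T)$ and $\|(T-c_A(T))x_n\|_A\to d_A(T)$, and then computes $d_A^2(T)=\lim_n\bigl\{\|Tx_n\|_A^2-|\langle Tx_n,x_n\rangle_A|^2\bigr\}$. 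Your transfer approach instead offloads the analytic work to the classical distance formula on the genuine Hilbert space $\mathbf{R}(A^{1/2})$ and reduces everything to matching two suprema via the density of $\mathcal{R}(A)$; this is entirely in keeping with how the paper itself proves Theorem~\ref{maintheorem01} and the surrounding results, so it is arguably the more systematic choice. The paper's direct computation, on the other hand, has the side benefit of producing an explicit maximizing sequence with $\langle Tx_n,x_n\rangle_A\to c_A(T)$, which is reused immediately afterwards in Remark~\ref{remark1} and in the proof of Corollary~\ref{Phytag}. One small caveat: your citation of \cite[Lemma~2]{s1} is off---Lemma~2 in Stampfli is the convexity of $W_{\max}$, not the distance formula; the identity you need follows from his Theorem~\ref{equi1970} by essentially the same computation the paper carries out here in the $A$-setting, so you should either locate the correct reference or sketch the short classical argument.
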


\begin{proof}
For any $x\in \h$ with $\nor{x}_{A}=1$, we have
\begin{align*}
  d^{2}_{A}(T)=\nor{ T-c_{A}(T)}_{A} ^{2} & \geq
  \nor{ (T-c_{A}(T))x }_{A}^{2}\\
   &=\nor{ Tx}_{A}^{2} + \modu{c_{A}(T) }^{2} - 2Re(\overline{c_{A}(T)} \langle Tx,x  \rangle_{A})   \\
 &\geq \nor{ Tx}_{A}^{2}- \modu{\langle Tx,x  \rangle_{A}}^{2}+\modu{ c_{A}(T) - \langle Tx,x \rangle_{A})}^{2}\\
 &\geq \nor{ Tx}_{A}^{2}- \modu{\langle Tx,x  \rangle_{A}}^{2}.
 \end{align*}
 Whence,
 $$d^{2}_{A}(T)\geq \sup_{\nor{x}_{A}=1} \left\lbrace  \nor{Tx}^{2}_{A} - \modu{\langle Tx,x \rangle_{A}}^{2}\right\rbrace.$$
 Conversely,
 $$ \nor{T-c_{A}(T)}_{A}=\inf_{\lambda \in \cit}\nor{T-\lambda}_{A}=\inf_{\lambda \in \cit}\nor{(T- c_{A}(T))-\lambda}_{A}. $$
 Then, $\nor{T-c_{A}(T)}_{A}\leq \nor{(T- c_{A}(T))-\lambda}_{A}$ for any $\lambda \in \cit$. Since $T-c_{A}(T) \in \bad$, from Theorem \ref{maintheorem01} we get $0\in W_{\max}^A(T-c_{A}(T)) $. So, there exists a sequence $(x_n)\subseteq\h$ with $\nor{x_{n}}_{A}=1$ such that
$$  \lim_{n}\langle (T- c_{A}(T)) x_n, x_n\rangle_A=0 \quad \text{and}  \quad  \lim_{n}\nor{(T- c_{A}(T)) x_n }_{A}=\nor{T- c_{A}(T)}_{A}.  $$
Then, $ \displaystyle\lim_{n}\langle T x_n, x_n\rangle_A = c_{A}(T) $ and
\begin{align*}
  \nor{T- c_{A}(T)}^{2}_{A}&= \lim_{n}\nor{(T- c_{A}(T)) x_n }^{2}_{A}\\
  &= \lim_{n}\left\lbrace  \nor{ Tx_{n}}_{A}^{2}- \modu{\langle Tx_{n},x _{n} \rangle_{A}}^{2}+\modu{ c_{A}(T) - \langle Tx_{n},x_{n} \rangle_{A})}^{2}\right\rbrace\\
 &=\lim_{n}\left\lbrace  \nor{ Tx_{n}}_{A}^{2}- \modu{\langle Tx_{n},x _{n} \rangle_{A}}^{2}\right\rbrace\\
 &\leq \sup_{\nor{x}_{A}=1} \left\lbrace  \nor{Tx}^{2}_{A} - \modu{\langle Tx,x \rangle_{A}}^{2}\right\rbrace.
 \end{align*}
Consequently,
$$d^{2}_{A}(T)= \sup_{\nor{x}_{A}=1} \left\lbrace  \nor{Tx}^{2}_{A} - \modu{\langle Tx,x \rangle_{A}}^{2}\right\rbrace.$$
The proof is complete.
\end{proof}
\begin{remark}\label{remark1} \normalfont{Let $T \in \bad$. There is a sequence $(x_n)\subseteq \h$ with $\nor{x_{n}}_{A}=1$ such that $ \displaystyle\lim_{n}\langle T x_n, x_n\rangle_A = c_{A}(T) $.  We derive that $ c_{A}(T) \in \overline{\wat}$. However,  $ c_{A}(T) $ need not be contained in  $ \maxat$. Indeed, the following corollary gives   sufficient and necessary conditions to have $c_{A}(T) \in \maxat$.}
\end{remark}
\begin{corollary}[\bf Pythagorean Relation]\label{Phytag}
Let $T \in \bad$. Then, the following statements are equivalent:
\begin{itemize}
  \item  [(1)]$c_{A}(T) \in \maxat$,
  \item [(2)] $d^{2}_{A}(T)+ \modu{c_{A}(T)}^{2}= \nor{T}^{2}_{A}$.
\end{itemize}
\end{corollary}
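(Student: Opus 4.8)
The plan is to recycle the minimizing sequence already constructed in the proof of Theorem \ref{formula} and to observe that it computes the quantity $d_A^2(T)+\modu{c_A(T)}^2$ exactly. First I would recall, via Remark \ref{remark1} (equivalently, by applying Corollary \ref{kbb} to $T-c_A(T)$, so that $0\in W_{\max}^A(T-c_A(T))$), that there is a sequence $(x_n)\subseteq\h$ with $\nora{x_n}=1$ satisfying
\[
\lim_n\langle Tx_n,x_n\rangle_A=c_A(T)\qquad\text{and}\qquad \lim_n\nora{(T-c_A(T))x_n}=d_A(T).
\]
The engine of the argument is the pointwise identity already used inside the proof of Theorem \ref{formula}: for every $x\in\h$ with $\nora{x}=1$,
\[
\nora{(T-c_A(T))x}^2=\nora{Tx}^2-\modu{\langle Tx,x\rangle_A}^2+\modu{c_A(T)-\langle Tx,x\rangle_A}^2.
\]
Evaluating this along $(x_n)$ and letting $n\to\infty$, the last term vanishes while the middle term tends to $\modu{c_A(T)}^2$, so that $\displaystyle\lim_n\nora{Tx_n}^2=d_A^2(T)+\modu{c_A(T)}^2$.

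From here I would first record the unconditional inequality $d_A^2(T)+\modu{c_A(T)}^2\le\nora{T}^2$, which is immediate because $\nora{Tx_n}\le\nora{T}$ for every $n$ (as $\nora{x_n}=1$). This single inequality drives the easy implication. For $(2)\Rightarrow(1)$: assuming the equality $d_A^2(T)+\modu{c_A(T)}^2=\nora{T}^2$, the limit computed above forces $\lim_n\nora{Tx_n}=\nora{T}$; combined with $\nora{x_n}=1$ and $\langle Tx_n,x_n\rangle_A\to c_A(T)$, this is precisely the defining condition for $c_A(T)\in\maxat$.

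For $(1)\Rightarrow(2)$: assuming $c_A(T)\in\maxat$, I would pick a (generally different) sequence $(y_n)\subseteq\h$ with $\nora{y_n}=1$, $\langle Ty_n,y_n\rangle_A\to c_A(T)$ and $\nora{Ty_n}\to\nora{T}$. Applying the same identity to $(y_n)$ and passing to the limit yields $\lim_n\nora{(T-c_A(T))y_n}^2=\nora{T}^2-\modu{c_A(T)}^2$; since $\nora{(T-c_A(T))y_n}\le\nora{T-c_A(T)}=d_A(T)$, this gives $\nora{T}^2\le d_A^2(T)+\modu{c_A(T)}^2$, the reverse of the unconditional inequality, and equality follows. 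The argument has no real obstacle; the only point requiring care is to keep the two sequences conceptually separate (the minimizing one coming from Theorem \ref{formula} and the maximizing one coming from the definition of $\maxat$), together with the clean observation that the common value $d_A^2(T)+\modu{c_A(T)}^2$ never exceeds $\nora{T}^2$ and equals it exactly when the $A$-center of mass is attained in the $A$-maximal numerical range.
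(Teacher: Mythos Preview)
Your proof is correct and follows essentially the same approach as the paper: both directions rely on the pointwise identity from the proof of Theorem~\ref{formula}, the minimizing sequence $(x_n)$ coming from $0\in W_{\max}^A(T-c_A(T))$ is reused for $(2)\Rightarrow(1)$, and a sequence witnessing $c_A(T)\in\maxat$ is used for $(1)\Rightarrow(2)$. The only cosmetic difference is in how the unconditional inequality $d_A^2(T)+\modu{c_A(T)}^2\le\nora{T}^2$ is obtained: the paper reads it off directly from the defining inequality \eqref{pythastam} with $\lambda=-c_A(T)$, whereas you deduce it from $\nora{Tx_n}\le\nora{T}$ along the minimizing sequence; both are immediate.
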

\begin{proof}
$(1) \Rightarrow (2)$:
Assume that  $c_{A}(T) \in \maxat$. There is a sequence $(x_n)\subseteq\h$ with $\nor{x_{n}}_{A}=1$ such that
$$  \lim_{n}\langle T x_n, x_n\rangle_A=c_{A}(T) \quad \text{and}  \quad  \lim_{n}\nor{T x_n }_{A}=\nor{T}_{A}.  $$
As above, we have
\begin{align*}
  \nor{T- c_{A}(T)}^{2}_{A}&\geq \lim_{n}\nor{(T-c_{A}(T))x_{n}}^{2}_{A}\\
 &=\lim_{n}\left\lbrace  \nor{ Tx_{n}}_{A}^{2}- \modu{\langle Tx_{n},x _{n} \rangle_{A}}^{2}\right\rbrace\\
 &= \nor{T}^{2}_{A}-\modu{c_{A}(T)}^{2}.
  \end{align*}
  Hence,
  \begin{equation*}
  \nor{T- c_{A}(T)}^{2}_{A}+\modu{c_{A}(T)}^{2}\geq \nor{T}^{2}_{A}.
  \end{equation*}
  Taking $\lambda =- c_{A}(T)$ in Inequality \eqref{pythastam}, we obtain
  \begin{equation}\label{ineq}
  \nor{T- c_{A}(T)}^{2}_{A}+\modu{c_{A}(T)}^{2}\leq \nor{T}^{2}_{A}.
  \end{equation}
Hence, $$ \nor{T- c_{A}(T)}^{2}_{A}+\modu{c_{A}(T)}^{2}= \nor{T}^{2}_{A}.$$
$(2) \Rightarrow (1)$: Assume that $ d^{2}_{A}(T)+\modu{c_{A}(T)}^{2}= \nor{T}^{2}_{A}$. From the proof of Theorem \ref{formula}, there is a sequence $(x_n)\subseteq\h$ with $\nor{x_{n}}_{A}=1$ such that
 $ \displaystyle\lim_{n}\langle T x_n, x_n\rangle_A = c_{A}(T) $ and
 \begin{align*}
 d^{2}_{A}(T)= \nor{T- c_{A}(T)}^{2}_{A}
 &=\lim_{n}\left\lbrace  \nor{ Tx_{n}}_{A}^{2}- \modu{\langle Tx_{n},x _{n} \rangle_{A}}^{2}\right\rbrace\\
 &=\lim_{n}  \nor{ Tx_{n}}_{A}^{2}- \modu{c_{A}(T)}^{2}.
 \end{align*}
Remembering the hypothesis, we infer that $\lim_{n}  \nor{ Tx_{n}}_{A}=\nor{T}_{A}$. Consequently, $c_{A}(T) \in \maxat$.
\end{proof}
\begin{remark}\label{remark2} \normalfont {Let $T \in \bad$. From Remark \ref{remark1},  $ c_{A}(T) \in \overline{\wat}$. So, $\modu{ c_{A}(T)} \leq \omega_{A}(T)  $. We know that $\maxat \subseteq \overline{\wat}$, the following  question arises: what about $\modu{ c_{A}(T)}$ and $\omega^{A}_{\max}(T)$?}
\end{remark}
\noindent
Define $$m^{A}_{\max}(T):=\inf\set{\modu{\lambda};~\lambda \in \maxat},$$ for any $T\in \bad$. The following answers this question.
\begin{theorem}
Let $T\in \bad$  Then, $$\modu{ c_{A}(T)}\leq m^{A}_{\max}(T). $$ In particular, $$\modu{ c_{A}(T)}\leq \omega^{A}_{\max}(T). $$
\end{theorem}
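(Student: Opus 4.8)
The plan is to establish the pointwise estimate $\modu{c_A(T)} \le \modu{\lambda}$ for \emph{every} $\lambda \in \maxat$, and then to pass to the infimum over such $\lambda$ to obtain $\modu{c_A(T)} \le m^A_{\max}(T)$. The stated consequence $\modu{c_A(T)} \le \omega^A_{\max}(T)$ follows at once, since $m^A_{\max}(T) \le \omega^A_{\max}(T)$ because $\maxat$ is nonempty. Throughout I abbreviate $c = c_A(T)$ and exploit the two facts about $c$ already recorded: the Pythagorean inequality \eqref{pythastam} of Corollary \ref{kbb}, and that $T-c \in \bad$ with $\nor{T-c}_A = d_A(T)$.

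Fix $\lambda \in \maxat$ and choose a sequence $(x_n) \subseteq \h$ with $\nor{x_n}_A = 1$, $\langle T x_n, x_n\rangle_A \to \lambda$ and $\nor{T x_n}_A \to \nora{T}$. The key step is the orthogonal-type expansion obtained by writing $T = (T - c) + c$:
\begin{equation*}
\nor{T x_n}_A^2 = \nor{(T-c)x_n}_A^2 + 2\,\mathrm{Re}\!\left(\bar c\,\langle (T-c)x_n, x_n\rangle_A\right) + \modu{c}^2.
\end{equation*}
Since $\langle (T-c)x_n, x_n\rangle_A = \langle T x_n, x_n\rangle_A - c \to \lambda - c$ while the left-hand side tends to $\nora{T}^2$, the summand $\nor{(T-c)x_n}_A^2$ is forced to converge; and because $\nor{(T-c)x_n}_A \le \nor{T-c}_A = d_A(T)$ for every $n$, its limit is at most $d_A(T)^2$. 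Passing to the limit thus yields
\begin{equation*}
\nora{T}^2 - 2\,\mathrm{Re}(\bar c\,\lambda) + \modu{c}^2 \le d_A(T)^2 .
\end{equation*}

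To conclude I combine this with \eqref{pythastam}: choosing $\lambda = -c$ there gives $d_A(T)^2 + \modu{c}^2 \le \nora{T}^2$, i.e.\ $\modu{c}^2 \le \nora{T}^2 - d_A(T)^2$. Feeding the previous display into this inequality produces $\modu{c}^2 \le 2\,\mathrm{Re}(\bar c\,\lambda) - \modu{c}^2$, whence $\modu{c}^2 \le \mathrm{Re}(\bar c\,\lambda) \le \modu{c}\,\modu{\lambda}$. If $c \neq 0$ I divide by $\modu{c}$ to get $\modu{c} \le \modu{\lambda}$; if $c = 0$ the inequality is trivial. Taking the infimum over $\lambda \in \maxat$ then gives $\modu{c_A(T)} \le m^A_{\max}(T)$, and the ``in particular'' assertion follows from $m^A_{\max}(T) \le \omega^A_{\max}(T)$.

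The only delicate point I anticipate is justifying that $\nor{(T-c)x_n}_A^2$ genuinely converges rather than merely staying bounded; this is handled entirely by the displayed identity, in which every other term converges, so that the termwise bound $\nor{(T-c)x_n}_A \le d_A(T)$ may be applied before passing to the limit and no subsequence extraction is needed. Everything else reduces to routine algebra of the semi-inner product $\langle \cdot, \cdot\rangle_A$ and the two properties of $c_A(T)$ cited above.
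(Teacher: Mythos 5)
Your proof is correct and follows essentially the same route as the paper: both expand $\nor{(T-c_A(T))x_n}_A^2$ along a sequence realizing a point of $\maxat$, bound it by $d_A(T)^2$, and combine with the inequality $d_A(T)^2+\modu{c_A(T)}^2\le\nora{T}^2$ obtained from \eqref{pythastam} with $\lambda=-c_A(T)$. The only cosmetic differences are that you work with an arbitrary $\lambda\in\maxat$ and take the infimum at the end (thereby avoiding the paper's compactness argument for a minimizer) and that you keep $\mathrm{Re}(\overline{c_A(T)}\lambda)$ exact until the final Cauchy--Schwarz step.
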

\begin{proof}
By an argument of compactness, there exists $\alpha \in \maxat$ such that $\modu{\alpha}= m^{A}_{\max}(T)$. Hence, there is a sequence     $(x_{n}) \subseteq \h$ with $\nor{x_{n}}_{A}=1$ satisfying
\begin{equation*}
\alpha =\lim_{n}\langle Tx_{n},x_{n}  \rangle_{A} \quad \text{and} \quad \lim_{n}\nor{ Tx_{n} }_{A} =\nor{T}_{A}.
\end{equation*}
Therefore, we have
\begin{align*}
  \nor{ T-c_{A}(T)}_{A} ^{2} & \geq
  \nor{ (T-c_{A}(T))x_{n} }_{A}^{2}\\
   &=\nor{ Tx_{n}}_{A}^{2} + \modu{c_{A} }^{2} - 2Re(\overline{c_{A}(T)} \langle Tx_{n},x_{n}  \rangle_{A})   \\
 &\geq \nor{ Tx_{n}}^{2}+ \modu{c_{A}(T)}^{2}-2\modu{ c_{A}}  \modu{\langle Tx_{n},x_{n}  \rangle_{A}}.
 \end{align*}
  It results that
 \begin{align}\label{2.3}
  \nor{ T-c_{A}(T)}_{A} ^{2}  & \geq \nor{T}_{A}^{2}+ \modu{ c_{A}(T)}^{2}-2\modu{ c_{A}(T)} m_{\max}^{A}(T)\\
  &= \nor{T}_{A}^{2} - (m_{\max}^{A}(T))^{2} + (m_{\max}^{A}(T)- \modu{c_{A}(T)})^{2}.   \nonumber
  \end{align}
 Thus,
 $$  \nor{ T-c_{A}(T)}_{A} ^{2}+ (m_{\max}^{A}(T))^{2}   \geq  \nor{T}_{A}^{2}  + (m_{\max}^{A}(T)- \modu{c_{A}(T)})^{2}. $$
  We see that
 \begin{equation}\label{d+w' sup norA}
 \nor{ T-c_{A}(T)}_{A} ^{2}+ (m_{\max}^{A}(T))^{2}   \geq  \nor{T}_{A}^{2}
\end{equation}
 and from Inequality   \eqref{ineq}, we get $ m_{\max}^{A}(T) \geq \modu{c_{A}(T)} $.
 \end{proof}
\begin{remark}\normalfont{In \cite{drag1}, it is proved that
\begin{equation}\label{drag}
 \nor{T}^{2} \leq d^{2}(T)+\omega^{2}(T)
\end{equation}
for any $T\in \bh$. From Inequality \eqref{d+w' sup norA}, we have
\begin{equation}\label{kbb}
 \nor{T}_{A}^{2}\leq d_{A}^{2}(T) ^{2}+ (m_{\max}^{A}(T))^{2} \leq  d_{A}^{2}(T)+\omega_{A}^{2}(T).
\end{equation}
Note that, taking $A=I$, Inequality \eqref{kbb} is a refinement of Inequality \eqref{drag}.
On the other hand, from Inequality \eqref{2.3}, we have
$$  \nor{T}_{A}^{2}+ \modu{ c_{A}(T)}^{2} \leq d_{A}^{2}(T)   +2\modu{ c_{A}(T)} m_{\max}^{A}(T). $$
Then
$$ 2\nor{T}_{A}\modu{ c_{A}(T)}\leq d_{A}^{2}(T)   +2\modu{ c_{A}(T)} m_{\max}^{A}(T).  $$
Consequently, if $ c_{A}(T) \neq 0$ (i.e., $0\notin \maxat$), then
$$ \nor{T}_{A} \leq m_{\max}^{A}(T)+\dfrac{1}{2}\dfrac{d_{A}^{2}(T)}{\modu{ c_{A}(T)}}. $$
Therefore, if $ c_{A}(T) \neq 0$, we get}
$$\nor{T}_{A} \leq \inf \left\lbrace  \big(d_{A}^{2}(T) ^{2}+ (m_{\max}^{A}(T))^{2} \big)^{1/2} \ , \ m_{\max}^{A}(T)+ \dfrac{1}{2}\dfrac{d_{A}^{2}(T)}{\modu{ c_{A}(T)}} \right\rbrace .$$
Note that if $c_{A}(T)=0$, then $\nor{T}_{A}=d_{A}(T)$.
\end{remark}
\noindent{\bf{Conflict of interest:}}
 On behalf of all authors, the corresponding author states that
there is no conflict of interest.

\noindent{\bf{Data availability:}}
Data sharing not applicable to the present paper as no data sets were generated or analyzed
during the current study.

\end{document}